\newtheorem{remark}[theorem]{Remark}
\DeclareMathOperator{\sing}{sing}
\DeclareMathOperator{\hi}{hi}
\DeclareMathOperator{\inst}{inst}
\newcommand {\mat}      [1] {\left[\begin{array}{#1}}
\newcommand {\rix}          {\end{array}\right]}
\newcommand{\beq}{\begin{equation}}
\newcommand{\eeq}{\end{equation}}
\newcommand{\od}{\mathrm{od}}
\newcommand{\ev}{\mathrm{ev}}
\newcommand{\cM}{{\mathcal M}}
\newcommand{\G}{{\mathbf G}}
\newcommand{\M}{{\mathbf M}}
\newcommand{\V}{{\mathcal V}}
\newcommand{\R}{\mathbb R}
\newcommand{\Sym}{\mathrm{Sym}}
\newcommand{\Skew}{\mathrm{Skew}}
\def\iu{\imagunit}
\def\i{{\rm i}}
\def\r{{\rm r}}
\def\eps{\varepsilon}
\def\l{\lambda}
\def\tp{{\rm T}}
\newcommand{\Z}{{\mathbf Z}}
\renewcommand{\Re}{{\mbox{\rm Re}}}
\renewcommand{\Im}{{\mbox{\rm Im}}}
\newcommand{\imagunit}{{\bf i}}
\newcommand{\conj}[1]{\overline{#1}}
\newcommand{\epstar}{\eps^{\ast}}
\newcommand{\bmx}{\begin{pmatrix}}
\newcommand{\emx}{\end{pmatrix}}
\newcommand{\bsm}{\left[\begin{smallmatrix}}
\newcommand{\esm}{\end{smallmatrix}\right]}
\newcommand{\norm}[1]{\|#1\|}
\newcommand {\eproof}
      {\space
        {\ \vbox{\hrule\hbox{\vrule height1.3ex\hskip0.8ex\vrule}\hrule}}
        \par}
\title{Computation of the nearest structured matrix triplet with common null space}
\author{Nicola Guglielmi\footnotemark[1] \and Volker Mehrmann\footnotemark[3]}
\begin{document}

\maketitle

\renewcommand{\thefootnote}{\fnsymbol{footnote}}
\footnotetext[1]{Gran Sasso Science Institute,
Section of Mathematics,
Via Crispi 7,
I-67010    L' Aquila,  Italy. Email: {\tt nicola.guglielmi@gssi.it}}
\footnotetext[2]{Institut f\"ur Mathematik, MA 4-5, TU Berlin,
       Str. der 17 Juni 136,
       D--10623 Berlin,
       Germany. Email: {\tt mehrmann@math.tu-berlin.de}}
\renewcommand{\thefootnote}{\arabic{footnote}}

\begin{abstract}
We study computational methods for computing the distance to  singularity, the
distance to the nearest high index problem, and the distance to instability for linear differential-algebraic systems (DAEs) with dissipative Hamiltonian structure.  While for general unstructured DAEs the characterization of these distances is very difficult, and partially open, it has been shown in \cite{MehMW20} that for dissipative Hamiltonian systems and related matrix pencils there exist explicit characterizations. We will use these characterizations for the development of computational methods to compute these distances via methods that follow the flow of a differential equation converging to the smallest perturbation that destroys the property of regularity, index one or stability.
\end{abstract}

\noindent
\begin{keywords}
dissipative Hamiltonian systems, structured distance to singularity, structured distance to high index problem, structured distance to instability, 
low-rank perturbation, differential-algebraic system
\end{keywords}

 \begin{AMS}15A18, 15A21, 65K05, 15A22 \end{AMS} %

\section{Introduction}

We derive computational methods for determining the distance to  singularity, the
distance to the nearest high index problem, and the distance to instability for \emph{linear, time-invariant differential-algebraic systems (DAEs) with dissipative Hamiltonian structure (dHDAEs)}. Such systems arise as linearization of general dHDAEs along a stationary solution and have the form
\begin{equation}\label{dH}
E\dot x = (J-R)x+f,
\end{equation}
with constant coefficient matrices  $E,J, R\in{\mathbb R}^{n,n}$, $J=-J^\top$, and $E=E^\top,R=R^\top$ symmetric positive semidefinite, a differentiable state function $x:{\mathbb R} \to {\mathbb R}^n$ and a right hand side $f:{\mathbb R} \to {\mathbb R}^n$, see
\cite{BeaMXZ18,GilMS18,JacZ12,MehMW18,MehMW20,MehM19,Sch13,SchJ14,SchM02,SchM18} for slightly varying definitions and a detailed analysis of such systems also in the context of the more general \emph{port-Hamiltonian systems}.  The matrix $E$ is associated with the Hessian of the associated \emph{Hamiltonian} energy function, which in the quadratic case has the form  $\mathcal H(x)= \frac 12 x^\top E x $. It is well-known \cite{BeaMXZ18,MehM19,SchJ14} that pHDAEs satisfy a \emph{dissipation inequality}
 $ {\mathcal H}\big(x(t_1)\big)-{\mathcal H}\big(x(t_0)\big) \leq 0$ for $t_1\geq t_0$.

Such pHDAE systems arise in all areas of science and engineering
\cite{AltMU20,BeaMXZ18,EggKLMM18,MehM19,SchJ14} as linearizations, space discretization, or approximation of physical systems and are usually  model descriptions with uncertainties. It is therefore important to know whether the model is close to an ill-posed or badly formulated model, and this has been an important research topic recently, see \cite{AchAM21,AliMM20,BeaMV19,GilMS18,GilS17,GilS18,MehMS16,MehMS17,MehMW20,MehV20}. Since the  system properties of \eqref{dH} are characterized by investigating the corresponding \emph{dissipative Hamiltonian (dH) matrix pencil}
\begin{equation}\label{dHwoQ}
L(\lambda):=\lambda E-(J-R),
\end{equation}
the discussed nearness problems can be characterized by determining the distance to the nearest singular pencil, i.e., a pencil with a $\det(\lambda E-J+R)$ identically zero, the distance to the nearest high-index problem, i.e., a problem with Jordan blocks associated to the eigenvalue $\infty$ of size
bigger than one, or the nearest problem on the boundary of the unstable region, i.e. a problem with purely imaginary eigenvalues. To compute these  distances is very difficult for general linear systems \cite{BerGTWW17,BerGTWW19,ByeHM98,GilMS18,GilS17,GilS18,GugLM17,MehMW15}. However, if one restricts the perturbations to be structured, i.e. one considers  \emph{structured distances} within the class of linear time-invariant dHDAEs, the situation changes completely, see  \cite{GilMS18,GilS17,GilS18,MehMW18,MehMW20}, and one obtains very elegant characterizations that can be used in numerical methods to compute these distances.

These methods are usually based on  non-convex optimization approaches. In contrast to such approaches, we derive computational methods to compute these structured distances by following the flow of a differential equation. This approach  has been shown to be extremely effective for computing the distance to singularity for general matrix pencils \cite{GugLM17} and we will show that this holds even more so in the structured case.

Neither the methods based on non-convex optimization nor the methods based on following a flow for general pencils or structured pencils are really feasible for large scale problems.
To treat the large sparse case they have to be combined with projections on the sparsity structure and model reduction methods, see \cite{AliBMSV17,AliMM20}, which  intertwine the optimization step with model reduction via interpolation. Here we discuss only the small scale case, but the combination with interpolation methods can be carried out in an analogous way as in \cite{AliMM20}.

The paper is organized as follows.
In Section~\ref{sec:prelim} we recall a few basic results about  linear time-invariant dHDAE systems.
In Section~\ref{sec:optB} we discuss optimization methods that are based on gradient flow computations. Since the cases of even and odd dimension are substantially different, in Section~\ref{sec:gradnodd} we specialize these methods for the optimization problems associated with the three discussed distance problems for the case that the state dimension is odd, while in Section~\ref{sec:even} we discuss the case that the state dimension is even.
Since it is known that the optimal perturbations are rank two matrices, in Section~\ref{sec:rank2} for the odd size case we discuss the special situation that we restrict the perturbation to be at most of rank two. In Section~\ref{sec:eps} we briefly discuss the iterative procedure for computing the optimal $\eps$ in the upper level of the two level procedure. In all cases, we present numerical examples.

\section{Preliminaries}\label{sec:prelim}
We use the following notation. The set of symmetric (positive semidefinite) matrices in $\mathbb R^{n,n}$ is denoted by $\Sym^{n,n}$ ($\Sym^{n,n}_{\geq0}$), and the skew-symmetric  matrices in $\mathbb R^{n,n}$ by $\Skew^{n,n}$. By $\| X\|_F$ we denote the Frobenius norm
of a (possibly rectangular) matrix $X$, we extend this norm to matrix tuples $\mathcal X=(X_0,\dots,X_k)$ via
 $\| \mathcal X\|_F=\|[X_0,\dots,X_k]\|_F$. For $A, B \in {\mathbb C}^{n,n}$, we denote by
\[
\langle A,B\rangle = \text{tr}(B^H A)
\]
the Frobenius inner product on ${\mathbb C}^{n,n}$, where $B^H$ is the conjugate transpose of $B$. The Euclidian norm in $\mathbb R^n$ is denoted by $\|~\|$. By $\lambda_{\min}(X)$ we denote the smallest eigenvalue of $X\in \Sym^{n,n}_{\geq0} $. The real and imaginary part of a complex matrix $A\in {\mathbb C}^{n,n}$ is denoted by $\Re(A)$, $\Im(A)$, respectively.

To characterize the properties of dHDAEs of the form \eqref{dH}, we exploit the \emph{Kronecker canonical form} of the associated matrix pencil \eqref{dHwoQ}, see \cite{Gan59a}. If $\mathcal J_n(\lambda_0)$ denotes the standard upper triangular Jordan block of size $n\times n$ associated with an eigenvalue $\lambda_0$ and $\mathcal L_n$ denotes the standard right Kronecker block of size $n\times(n+1)$, i.e.,
\[
\mathcal L_n=\lambda\left[\begin{array}{cccc}
1&0\\&\ddots&\ddots\\&&1&0
\end{array}\right]-\left[\begin{array}{cccc}
0&1\\&\ddots&\ddots\\&&0&1
\end{array}\right],
\]
then for $E,A\in {\mathbb C}^{n,m}$ there exist nonsingular matrices
$S\in {\mathbb C}^{n,n}$ and $T\in {\mathbb C}^{m,m}$ that transform the pencil to \emph{Kronecker canonical form},
\begin{equation}\label{kcf}
S(\lambda E-A)T=\diag({\cal L}_{\epsilon_1},\ldots,{\cal L}_{\epsilon_p},
{\cal L}^\top_{\eta_1},\ldots,{\cal L}^\top_{\eta_q},
{\cal J}_{\rho_1}^{\lambda_1},\ldots,{\cal J}_{\rho_r}^{\lambda_r},{\cal N}_{\sigma_1},\ldots,
{\cal N}_{\sigma_s}),
\end{equation}
where $p,q,r,s,\epsilon_1,\dots,\epsilon_p,\eta_1,\dots,\eta_q,\rho_1,\dots,\rho_r,\sigma_1,\dots,\sigma_s\in\mathbb N_0$ and
$\lambda_1,\dots,\lambda_r\in\mathbb C$, as well as ${\cal J}_{\rho_i}^{\lambda_i}=I_{\rho_i}-\mathcal J_{\rho_i}(\lambda_i)$
for $i=1,\dots,r$ and $\mathcal N_{\sigma_j}=\mathcal J_{\sigma_j}(0)-I_{\sigma_j}$ for $j=1,\dots,s$.

For real matrices and real transformation matrices $S,T$, the  blocks ${\cal J}_{\rho_j}^{\lambda_j}$ with $\lambda_j\in\mathbb C\setminus\mathbb R$ are in \emph{real Jordan canonical form} associated to the corresponding pair of conjugate complex eigenvalues,  the other blocks are the same. A real or complex eigenvalue is called \emph{semisimple} if the largest associated Jordan block in the complex Jordan form has size one and the sizes $\eta_j$ and $\epsilon_i$
are called the \emph{left and right minimal indices} of $\lambda E-A$, respectively. A pencil $\lambda E-A$,  is called \emph{regular} if $n=m$ and
$\det(\lambda_0 E-A)\neq 0$ for some $\lambda_0 \in \mathbb C$,
otherwise it is called \emph{singular}; $\lambda_1,\dots,\lambda_r\in\mathbb C$ are called the finite eigenvalues of $\lambda E-A$, and $\lambda_0=\infty$ is an eigenvalue of $\lambda E-A$ if zero is an eigenvalue of
the $\lambda A-E$.
The size of the largest block ${\cal N}_{\sigma_j}$ is
called the \emph{index} $\nu$ of the pencil $\lambda E-A$.

The definition of stability for differential-algebraic systems varies in the literature. We call a pencil $\lambda E-A$  {\em Lyapunov stable (asymptotically stable)} if it is regular, all finite eigenvalues are in the closed (open) left half plane, and the ones lying on the imaginary axis (including $\infty$) are semisimple \cite{DuLM13}.
Note that pencils with eigenvalues on the imaginary axis or at $\infty$ are on the boundary of the set of asymptotically systems and those with multiple, but semisimple, purely imaginary eigenvalues (including $\infty$) lie on the boundary of the set of Lyapunov stable pencils.

The following theorem summarizes some results of \cite{MehMW18,MehMW20} for real dH pencils; note that some of the results also hold in the complex case.
\begin{theorem}\label{thm:singind}
Let $E,R\in \mathbb R^{n,n}$ be symmetric and positive semidefinite, and
$J=-J^\top\in\mathbb R^{n,n}$. Then the following
statements hold for the pencil $L(\lambda)=\lambda E-J+R$.
\begin{enumerate}
\item[\rm (i)] If $\lambda_0\in\mathbb C$ is an eigenvalue of $L(\lambda)$ then $\operatorname{Re}(\lambda_0)\leq 0$.
\item[\rm (ii)] If $\omega\in\mathbb R$ and $\lambda_0=i\omega$ is an eigenvalue of $L(\lambda)$, then
$\lambda_0$ is semisimple. Moreover, if the columns of $V\in\mathbb C^{m,k}$ form a basis of a regular deflating
subspace of $L(\lambda)$ associated with $\lambda_0$, then $RQV=0$.
\item[\rm (iii)] The index of $L(\lambda)$ is at most two.
\item[\rm (iv)] All right and left minimal indices of $L(\lambda)$ are zero (if there are any).
\item[\rm (v)] The pencil $L(\lambda)$ is singular if and only if $\,\ker J\cap\ker E\cap\ker R\neq\{0\}$.
\end{enumerate}
\end{theorem}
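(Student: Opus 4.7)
The unifying tool will be the quadratic form identity obtained from testing the eigenvalue equation against the eigenvector. Specifically, if $v\in\mathbb C^n\setminus\{0\}$ satisfies $(J-R)v=\lambda E v$, then, using that $v^H J v\in\imagunit\mathbb R$ while $v^H E v, v^H R v\in\mathbb R_{\geq 0}$, taking real and imaginary parts of $v^H(J-R)v=\lambda v^H E v$ gives
\[
\Re(\lambda)\,v^H E v=-v^H R v,\qquad \Im(\lambda)\,v^H E v=-\imagunit\,v^H J v.
\]
From here (i) is immediate whenever $v^H E v>0$; the case $v^H E v=0$, i.e.\ $Ev=0$, is treated separately by noting that then $(J-R)v=0$ and no finite eigenvalue is involved. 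This same identity is the engine for (ii): if $\lambda=\imagunit\omega$, then $v^H R v=0$, and since $R\succeq 0$ this forces $Rv=0$, giving the asserted annihilation on the deflating subspace. Semisimplicity follows by assuming a Jordan chain $v_1,v_2$ with $(J-R)v_1=\imagunit\omega E v_1$ and $(J-R)v_2=\imagunit\omega E v_2+E v_1$; taking the inner product of the second relation with $v_1$, and using $Rv_1=0$ together with the skew-symmetry of $J$, yields $v_1^H E v_1=0$, hence $Ev_1=0$, hence $(J-R)v_1=0$, contradicting that $v_1$ heads a nontrivial chain.

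For (iii) I would suppose, for a contradiction, a Jordan chain at $\infty$ of length three, i.e.\ vectors $v_1,v_2,v_3$ with $Ev_1=0$, $Ev_2=(J-R)v_1$, and $Ev_3=(J-R)v_2$. Pairing the first relation with $v_1$ gives $v_1^\top R v_1=0$ and hence $Rv_1=0$, so $Ev_2=Jv_1$, yielding $v_2^\top E v_2=v_2^\top J v_1$. Pairing the third relation with $v_1$ and using $Ev_1=Rv_1=0$ together with $J^\top=-J$ gives $v_2^\top E v_2=-v_1^\top R v_2=0$, whence $Ev_2=0$ and thus $Jv_1=0$; combined with $Ev_1=Rv_1=0$ this puts $v_1\in\ker E\cap\ker J\cap\ker R$, so $v_1$ actually sits in the singular part of the Kronecker form rather than heading a length-three nilpotent block in the regular part. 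For (iv), I would apply the same chain of reasoning to the tip vector of a putative $\mathcal L_k$ block: its defining recursion forces a tip vector annihilated simultaneously by $E$, $J$, and $R$, so the block collapses to the minimal size, $k=0$.

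The statement (v) is then the payoff. The ``if'' direction is trivial: any $v\in\ker J\cap\ker E\cap\ker R$ satisfies $L(\lambda)v=0$ identically in $\lambda$, so the pencil is singular. For the ``only if'' direction, singularity of $L$ together with (iv) means the Kronecker form contains an $\mathcal L_0$ or $\mathcal L_0^\top$ block, i.e.\ there exists a nonzero constant vector $v$ with $Ev=0$ and $(J-R)v=0$. Pairing with $v^\top$ gives $v^\top J v-v^\top R v=0$, and since the first term vanishes by skew-symmetry we conclude $v^\top R v=0$, hence $Rv=0$ because $R\succeq 0$, and therefore also $Jv=0$.

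The main obstacle in carrying this out cleanly is the Jordan/Kronecker chain analysis behind (iii) and (iv): one has to be careful that the pairings with earlier chain vectors do not leave residual terms, and that the conclusion ``vector in all three kernels'' is correctly interpreted as forcing the structure of the Kronecker form rather than merely producing a singular pencil. Everything else is a direct consequence of the single scalar identity with which we began.
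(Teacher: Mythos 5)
The paper does not prove Theorem~\ref{thm:singind}; it states it as a summary of results from \cite{MehMW18,MehMW20}, so there is no in-paper argument to compare against. Your strategy is the standard one for such results and matches the spirit of those references: testing $(J-R)v=\lambda Ev$ against $v$, together with $v^H J v\in\imagunit\mathbb R$ and $v^H E v,\,v^H R v\geq 0$, gives (i) and the forced vanishing of $Rv$ in (ii), and pairing arguments propagated along Jordan and Kronecker chains drive (iii)--(v).

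There are two weaknesses. The first is a local slip in (iii): pairing the \emph{first} relation $Ev_1=0$ with $v_1$ yields only the vacuous $v_1^\top Ev_1=0$, not $v_1^\top Rv_1=0$. You need to pair the \emph{second} relation $Ev_2=(J-R)v_1$ with $v_1$: the left side is $(Ev_1)^\top v_2=0$, while the right side is $v_1^\top J v_1 - v_1^\top R v_1 = -v_1^\top R v_1$ since $v_1^\top Jv_1=0$ for real $v_1$; this gives $Rv_1=0$, after which the rest of your chain computation is correct. The second, more substantive issue is that (iv) is a plan rather than a proof. Showing that the tip $v_0$ of a putative $\mathcal L_k$ chain with $k\geq1$ lies in $\ker E\cap\ker J\cap\ker R$ does not by itself eliminate the block. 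You must iterate the same pairing down the whole chain to show that all of $v_0,\dots,v_k$ lie in $\ker E\cap\ker(J-R)$, and then invoke the fact that $\dim\bigl(\ker E\cap\ker(J-R)\bigr)$ equals the number of $\mathcal L_0$ blocks in the Kronecker canonical form (the regular blocks and the $\mathcal L_\epsilon$, $\mathcal L_\eta^\top$ blocks with index $\geq 1$ contribute nothing to this common kernel), so a block with $k\geq1$ would overcount that dimension. The analogous argument for left minimal indices starts from $(J+R)w_0=0$ and proceeds identically. You flag this delicacy yourself; until it is carried out, (iv) --- and hence the ``only if'' direction of (v), which relies on it --- remains incomplete.
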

Based on Theorem~\ref{thm:singind}, in \cite{MehMW20} the following distance problems were introduced for dH pencils.
%
\begin{definition}\label{def:distances}
Let $\mathcal{L}$ denote the class of square $n\times n$ real matrix pencils of the form \eqref{dHwoQ}.
Then
\begin{enumerate}
\item  the  \emph{structured distance to singularity}  is defined as
\begin{equation}\label{distEJR}
d_{\sing}^{\mathcal L}\big(L(\lambda)):=\inf\big\{\big\|  \Delta_L(\lambda) \big\|_F\ \big|\
L(\lambda)+\Delta_L(\lambda)\in\mathcal{L} \mbox{ and is singular}\big\};
\end{equation}
\item the  \emph{structured distance to the nearest high-index problem} is defined as
\begin{equation}\label{indexdistEJR}
d_{\hi}^{\mathcal{L}}\big(L(\lambda)):=\inf\big\{
\big\|  \Delta_L(\lambda)
\big\|_F\ \big| \
L(\lambda)+\Delta_L(\lambda)\in \mathcal{L} \mbox{ and is of index}\geq 2\big\};
\end{equation}
\item the  \emph{structured distance to instability} is defined as
\begin{equation}\label{instdistEJR}
d_{\inst}^{\mathcal{L}}\big(L(\lambda)\big):=\inf\big\{
\big\|
\Delta_L(\lambda)
\big\|_F \
\big| \ L(\lambda)+\Delta_L(\lambda)\in\mathcal{L} \mbox{ and is unstable}
\big \}.
\end{equation}
Here $\Delta_L(\lambda)=\lambda \Delta_E -\Delta_J + \Delta_R$, with $ \Delta_J\in \Skew^{n,n}$, $E+\Delta_E,R+\Delta_R\in \Sym^{n,n}_{\geq0}$, and
$\big\| [ \Delta_J, \Delta_R, \Delta_E]\big\|_F=\big\| [ \Delta_L(\lambda)] \big\|_F$.
\end{enumerate}
\end{definition}
It has also been shown in \cite{MehMW20} that these distances can be characterized as follows.
\begin{theorem}\label{singind}
Let  $\lambda E -J+R \in \mathcal L$.
Then the following statements hold.
\begin{enumerate}
\item\label{singindI}
Define for a matrix $Y\in {\mathbb R}^{n,n}$, the matrix $\Delta_Y^u=-uu^\top Y-Yuu^\top+uu^\top Yuu^\top$. The distance to singularity \eqref{distEJR} is attained with a perturbation
$\Delta_E=\Delta_E^u$, $\Delta_J=\Delta_J^u$, and $\Delta_R=\Delta_R^u$
for some $u\in\mathbb R^n$ with $\|u\|_2=1$. It is given by
\begin{small}
\begin{multline*}
d_{\sing}^{\mathcal{L}}\big(\lambda E - J+R\big)\\
=\min_{u \in{\mathbb R}^n\atop\norm u=1} \sqrt{2\norm{Ju}^2+2\big\|(I-uu^\top)Eu\big\|^2+(u^\top Eu)^2+
2\big\|(I-uu^\top)Ru\big\|^2+(u^\top Ru)^2}
\end{multline*}
\end{small}
and is bounded as
\begin{equation}\label{lmino}
\sqrt{\lambda_{\min} (-J^2+R^2+E^2)}\leq d_{\sing}^{\mathcal{L}}\big(\lambda E - J+R\big)\leq  \sqrt{2\cdot\lambda_{\min}(-J^2+R^2+E^2)}.
\end{equation}
\item\label{singindII} The structured distance to higher index~\eqref{indexdistEJR} and the structured distance to instability~\eqref{instdistEJR} coincide and satisfy
\begin{multline*}
d_{\hi}^{\mathcal{L}}\big(\lambda E - J+R\big)=d_{\inst}^{\mathcal{L}}\big(\lambda E - J+R\big)\\
=\min_{u \in{\mathbb R}^n\atop\norm u=1} \sqrt{2\big\|(I-uu^\top)Eu\big\|^2+(u^\top Eu)^2+
2\big\|(I-uu^\top)Ru\big\|^2+(u^\top Ru)^2} 
\end{multline*}
and are bounded as
\begin{equation}
\sqrt{\lambda_{\min}(E^2+R^2)}\leq d_{\hi}^{\mathcal{L}}\big(\lambda E - J+R\big)=d_{\inst}^{\mathcal{L}}\big(\lambda E - J+R\big)
\leq  \sqrt{2\cdot\lambda_{\min}(E^2+R^2)}.\label{dist_instab}
\end{equation}
\end{enumerate}
\end{theorem}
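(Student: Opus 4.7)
The plan is to reduce each distance problem to a finite-dimensional minimization over a single unit vector $u\in\mathbb R^n$ that will serve as a common null vector of the perturbed matrices, using the kernel characterizations in Theorem~\ref{thm:singind}. Part~(v) of that theorem identifies singularity of a pencil in $\mathcal L$ with $\ker E\cap\ker J\cap\ker R\neq\{0\}$, so for part~1 the problem splits into two layers: (a) fix a unit vector $u$, and (b) find structure-preserving $\Delta_E,\Delta_J,\Delta_R$ of minimum joint Frobenius norm such that $(E+\Delta_E)u=(J+\Delta_J)u=(R+\Delta_R)u=0$. The three inner subproblems then decouple across $E,J,R$.

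For each symmetric $Y\in\{E,R\}$ with $Y\succeq 0$ and fixed $u$, I would solve
\begin{equation*}
\min\bigl\{\|\Delta\|_F\ :\ \Delta=\Delta^\top,\ (Y+\Delta)u=0,\ Y+\Delta\succeq 0\bigr\}
\end{equation*}
by orthogonal decomposition relative to the projector $P:=I-uu^\top$. Writing any admissible symmetric $\Delta$ as $\alpha uu^\top+uv^\top+vu^\top+W$ with $u^\top v=0$ and $W=PWP$ symmetric, the constraint $\Delta u=-Yu$ forces $\alpha=-u^\top Yu$ and $v=-PYu$, so $\|\Delta\|_F^2=(u^\top Yu)^2+2\|PYu\|^2+\|W\|_F^2$, and $W=0$ is optimal, yielding $\Delta=\Delta_Y^u$. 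The identity $Y+\Delta_Y^u=PYP$ then shows the PSD constraint is automatically satisfied and hence inactive. The analogous computation for skew-symmetric $J$, using $u^\top Ju=0$, gives $\|\Delta_J^u\|_F^2=2\|Ju\|^2$. Summing the three squared norms produces the explicit objective in item~1.

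The bounds in~\eqref{lmino} then follow from the Pythagorean identity $\|Yu\|^2=\|(I-uu^\top)Yu\|^2+(u^\top Yu)^2$ combined with $0\leq(u^\top Yu)^2\leq\|Yu\|^2$, which sandwich each summand in the objective between $\|Yu\|^2$ and $2\|Yu\|^2$. Since $\|Ju\|^2+\|Eu\|^2+\|Ru\|^2=u^\top(-J^2+E^2+R^2)u$, infimizing over unit $u$ delivers the lower bound $\lambda_{\min}(-J^2+E^2+R^2)$, while evaluating at the unit eigenvector for this smallest eigenvalue gives the upper bound.

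For part~2 the same scheme applies once one invokes the characterization (stemming from Theorem~\ref{thm:singind}(ii)–(iii) and proved in~\cite{MehMW20}) that a pencil in $\mathcal L$ has index at least two if and only if $\ker E\cap\ker R\neq\{0\}$; the $J$-subproblem simply drops out, giving both the formula and the sandwich $\lambda_{\min}(E^2+R^2)\leq d_{\hi}^{\mathcal L}(L)^2\leq 2\lambda_{\min}(E^2+R^2)$ by the same Pythagorean argument. I expect the main obstacle to be the equality $d_{\hi}^{\mathcal L}(L)=d_{\inst}^{\mathcal L}(L)$: the inequality $d_{\inst}^{\mathcal L}\leq d_{\hi}^{\mathcal L}$ is immediate because a Jordan block at $\infty$ places the pencil on the boundary of asymptotic stability, whereas the reverse direction is delicate and requires showing that any minimum-norm perturbation driving a finite eigenvalue onto the imaginary axis can be matched by one that pushes the eigenvalue to $\infty$ instead. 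This exploits the forced orthogonality $Rv=0$ on an imaginary eigenvector $v$ implied by Theorem~\ref{thm:singind}(ii), which lets one redirect the perturbation into a common kernel of $E$ and $R$ at no extra Frobenius cost.
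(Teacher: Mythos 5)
The paper does not prove Theorem~\ref{singind}; it states it as a result taken from \cite{MehMW20}, so there is no in-paper proof to compare against. Judged on its own, your proposal for part~\ref{singindI} is sound and is in fact a clean way to obtain the formula: splitting $\Delta$ relative to the projector $P=I-uu^\top$ as $\alpha uu^\top+uv^\top+vu^\top+W$ and imposing $(Y+\Delta)u=0$ forces $\alpha=-u^\top Yu$, $v=-PYu$, with $W$ a free rank-deficient block that is optimally zero; the resulting minimizer is exactly $\Delta_Y^u$, the identity $Y+\Delta_Y^u=PYP$ shows the semidefiniteness constraint is inactive, and the Pythagorean bound $\|Yu\|^2\le 2\|PYu\|^2+(u^\top Yu)^2\le 2\|Yu\|^2$ together with $\|Ju\|^2+\|Eu\|^2+\|Ru\|^2=u^\top(-J^2+E^2+R^2)u$ gives \eqref{lmino}. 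You should add one sentence noting that the outer infimum over the compact unit sphere of a continuous function is attained, which is what justifies the word ``attained'' in the statement.

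Your treatment of part~\ref{singindII} has a conceptual slip in the $d_{\hi}^{\mathcal L}=d_{\inst}^{\mathcal L}$ discussion. You describe the hard direction as showing that a perturbation ``driving a finite eigenvalue onto the imaginary axis'' can be replaced by one pushing an eigenvalue to $\infty$. But within $\mathcal L$ this scenario never arises: by Theorem~\ref{thm:singind}(i)–(ii), a dH pencil has all finite eigenvalues in the closed left half-plane and all purely imaginary finite eigenvalues semisimple, so a regular pencil in $\mathcal L$ can only fail to be (Lyapunov) stable by having a non-semisimple eigenvalue at $\infty$, i.e.\ index $\ge 2$. Combined with singularity being the only other source of instability, and both singularity ($\ker E\cap\ker J\cap\ker R\neq\{0\}$) and index $\ge 2$ ($\ker E\cap\ker R\neq\{0\}$ for a regular pencil) being subsumed by the single condition $\ker E\cap\ker R\neq\{0\}$, the equality $d_{\hi}^{\mathcal L}=d_{\inst}^{\mathcal L}$ follows essentially tautologically once the kernel characterization of high index is in hand; there is no finite-eigenvalue migration to control. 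Also note that the ``$RQV=0$'' in the paper's statement of Theorem~\ref{thm:singind}(ii) is a typo carried over from the $\lambda E-(J-R)Q$ setting of \cite{MehMW20}; in the present $\lambda E-J+R$ formulation it reads $RV=0$, which is the fact you need.
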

With formulas and close upper and lower bounds available, these distances
can be computed  by global constrained optimization methods such as \cite{hanso}. 
Based on our experience in computing the distance to instability for general matrix pencils, where different computational methods  were studied and it was shown that gradient flow methods were extremely efficient, in the next section we introduce such gradient methods to compute the discussed structured distances.

\section{ODE-based gradient flow approaches} \label{sec:optB}
In the previous section we have seen that for dH pencils the distance to singularity is characterized
by the distance to the nearest common nullspace of three structured matrices and the distance to high index and instability by the distance to the nearest common nullspace of two symmetric positive definite matrices, with perturbations that keep the structure.

The perturbation matrices that give the structured distance to singularity  can be alternatively expressed as
\begin{eqnarray}
(\Delta E_*,\Delta R_*,\Delta J_*) &= &
\arg\min\limits_{\Delta E,\Delta R,\Delta J} \| (\Delta E, \Delta R, \Delta J) \| \nonumber \\
& \text{subj. to } & E+\Delta E , R+\Delta R \in \Sym^{n,n}_{\geq0}, \Delta J \in \Skew^{n,n},  \nonumber \\
& \text{and  }  & (E + \Delta E) x = 0,  \quad  (R + \Delta R) x = 0, \quad  (J + \Delta J) x = 0 \nonumber \\
&& \mbox{for some} \ x \in \R^{n}, x \neq 0.
\label{eq:problemB}
\end{eqnarray}
Then $d_{\sing}^{\mathcal{L}}\big(\lambda E - J+R\big) = \| (\Delta E_*,\Delta R_*, \Delta J_*) \|$ and our algorithmic approach to minimize this functional is based on this reformulation.

\subsection{A two-level minimization}\label{sec:2-level}
To  determine the minimum in \eqref{eq:problemB} we use a two-level minimization. As an  inner iteration, for a perturbation size  $\eps$, we consider perturbed matrices $E + \eps \Delta$, $R + \eps \Theta$ and $J + \eps \Gamma$ with $\| (\Delta, \Theta, \Gamma) \|_F \le 1$ satisfying the constraints in \eqref{eq:problemB}.
Let us denote
\begin{itemize}
\item[ (i) ] by $(\lambda,x)$ an eigenvalue/eigenvector pair of $E + \eps \Delta$ associated with the smallest eigenvalue and $\|x\|=1$;
\item[(ii) ] by $(\nu,u)$ an eigenvalue/eigenvector pair of $R + \eps \Theta$ associated with the smallest eigenvalue, and $\|u\|=1$;
\item[(iii-a) ] if $n$ is even, by $(\iu \mu,w)$ an eigenvalue/eigenvector pair of $J + \eps \Gamma$, with $\mu > 0$ such that $\iu \mu$ is the  eigenvalue with smallest positive imaginary part and $\|w\|=1$,
\item[(iii-b) ]  if $n$ is odd, by $(0,w)$  an eigenvalue/eigenvector pair of $J + \eps \Gamma$ (this exists for all $\Gamma$).
\end{itemize}

In the inner iteration, for any fixed $\eps$ we compute a (local) minimizer of \eqref{eq:problemB} that is, however, different for even or odd $n$.

\subsection*{The case that $n$ is odd} \label{sec:nodd}
In this case the skew-symmetric matrix always has a zero eigenvalue (with an associated real eigenvector $w$) so that the only contribution to the optimization  is through the alignment of $w$ with $x$ and $u$.
Hence,  the functional to be  minimized in (\ref{eq:problemB}) can be expressed in  the simplified form
%
\begin{equation} \label{Fodd}
F_\eps^\od(\Delta,\Theta,\Gamma) = \frac12 \Big(\lambda^2 + \nu^2 +
1-( x^\top u )^2 + 1-( x^\top w )^2 \Big) 
\end{equation}
with $\| \left( \Delta, \Gamma, \Theta \right) \|_F \le 1$.
It is, however,  possible to include a further term $1-| u^\top w |^2$ in the functional, which does not change
the solution but may have an  impact on the conditioning of the problem and hence the numerical performance.

\subsection*{The case that $n$ is even} \label{sec:neven}
{
In this case, when two eigenvalues $\pm \iu \mu$ ($\mu>0$) coalesce at $0$, they form
a semi-simple double eigenvalue and the associated eigenvectors $w=w_1+\iu w_2$ and $\conj{w}=w_1-\iu w_2$ form a two-dimensional
nullspace spanned by the two real vectors $w_1$ and $w_2$. These can be assumed to be orthogonal to each other, i.e. $w_1^\top w_2=0$ and have the same norm $1/\sqrt{2}$ so that still $\|w\|=1$. Using $w_1,w_2$, we define the real orthogonal matrix
\begin{equation*}
W = \sqrt 2 \left [ w_1, w_2 \right ],
\end{equation*}
and to satisfy the constraint in \eqref{eq:problemB}, we require that
\begin{equation*}
W z = x \qquad \mbox{for some} \ z \in \R^2.
\end{equation*}
This leads to the minimization of
\begin{equation*}
\| W z - x \| \qquad \mbox{for some} \ z \in \R^2.
\end{equation*}
Since $W$ is orthogonal, the solution is $z = W^\top x$, and the functional to be minimized takes the form
\begin{equation*}
1 - x^\top W W^\top x = 1- 2\,(x^\top w_1)^2 - 2\,(x^\top w_2)^2,
\end{equation*}
which is positive if $x$ does not lie in the range of $W$ and zero otherwise.

In summary, the functional in the even case is given by
%
\begin{equation} \label{Feven}
F_\eps^\ev(\Delta,\Theta,\Gamma) = \frac12 \Big(\lambda^2 + \nu^2 + \mu^2 +
1-( x^\top u )^2 + 1 - 2\,(x^\top \Re(w))^2 - 2\,(x^\top \Im(w))^2 \Big) 
\end{equation}
with $\| \left( \Delta, \Gamma, \Theta \right) \|_F \le 1$.
\medskip
\begin{remark}\label{rem:real}{\rm
In both the odd and the even case we have that
\[
\min_{\| \left( \Delta, \Gamma, \Theta \right) \|_F \le 1} F_\eps(\Delta,\Theta,\Gamma) = \min_{\| \left( \Delta, \Gamma, \Theta \right) \|_F = 1} F_\eps(\Delta,\Theta,\Gamma).
\]
To see this, consider  $(\Delta_*,\Theta_*,\Gamma_*)$ of Frobenius norm less than or equal to $1$ giving  a minimizer of the left-hand side and suppose that $(x_*,\lambda_*)$ is the minimizing eigenvalue/eigenvector pair of $E+\eps \Delta_*$. Then choosing a matrix $\widetilde\Delta\ne 0$ such that $\widetilde\Delta x_*=0$ and $\langle  \Delta_*, \widetilde\Delta \rangle =0$, for a suitable $\theta$ the matrix $\Delta_1=\Delta_*+\theta\widetilde\Delta$   is of unit Frobenius norm and has
$F_\eps(\Delta_1, \Theta_*,\Gamma_*)=F_\eps(\Delta_*, \Theta_*,\Gamma_*)$.
}
\end{remark}
\medskip

Using the functionals \eqref{Fodd}, respectively \eqref{Feven},  in our approach the local minimizer of $\min_{\| \left( \Delta, \Gamma, \Theta \right) \|_F = 1} F_\eps(\Delta,\Theta,\Gamma)$ is determined as an equilibrium point of the associated gradient system. Note, however,  that in general  this may not be a global minimizer.

For the outer iteration  we consider a continuous branch, as a function of $\eps$,  of the minimizers
$\left( \Delta(\eps), \Gamma(\eps), \Theta(\eps) \right)$ and vary $\eps$ iteratively
%
in order find the smallest solution of the scalar equation
\[
f(\eps)=F_\eps\left(\left( \Delta(\eps), \Gamma(\eps), \Theta(\eps) \right) \right) = 0
\]
with respect to $\eps$.
%
\begin{figure}[h]
\centerline{
\includegraphics[scale=0.43]{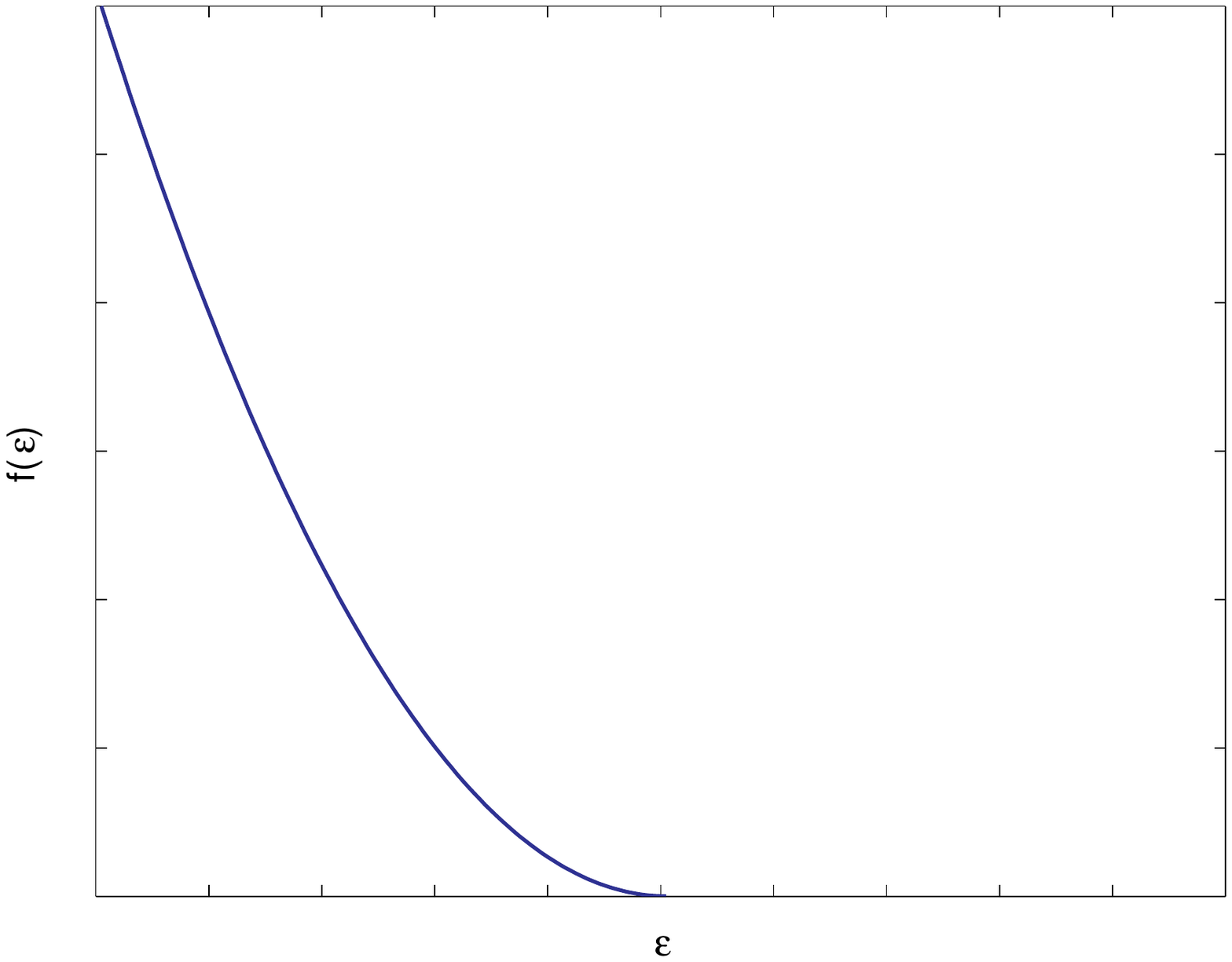}
}
\vspace{-3cm}
\caption{The function $f(\eps)$ in a neighbourhood of $\eps^\star$. For $\eps \ge \eps^\star$
it becomes identically zero. \label{fig_model}
}
\end{figure}

\begin{remark}\label{rem:other cases}{\rm
Note that the techniques for the distance to higher index or instability
follow directly by setting $J=0$ and not perturbing it.
}
\end{remark}

\subsection{Derivatives of eigenvalues and eigenvectors}
The considered minimization is an eigenvalue optimization problem. We will solve this problem by integrating a
differential(-algebraic)  equation  with trajectories that follow the gradient descent and satisfy further constraints. To develop such a method, we first recall a classical  result, see e.g. \cite{Kat95}, for the derivative of a simple eigenvalue
and an associated eigenvector of a matrix $C(t)$ with respect to variations in a real parameter $t$ of the entries.
Here we use the notation $\dot{C}(t):= \frac{d}{dt} C(t)$ to denote the derivative with respect to~$t$.
\begin{lemma} {\rm \cite[Section II.1.1]{Kat95}} \label{lem:eigderiv}
Consider a continuously differentiable matrix valued function
$C(t) :\mathbb \R \to \mathbb \R^{n,n}$, with $C(t)$ normal (i.e., $C(t)C(t)^\top=C(t)^\top C(t)$ for all $t$).
Let $\lambda(t)$ be a simple eigenvalue of $C(t)$ for all $t$ and let $x(t)$ with $\|x(t)\|=1$
be the associated (right and left) eigenvector. Then $\lambda (t)$ is differentiable with
\begin{equation}
\dot\lambda(t) = x(t)^H \dot{C}(t) x(t).
\label{eq:deigval}
\end{equation}
\end{lemma}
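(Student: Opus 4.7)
The plan is to prove the result by a direct differentiation of the eigenvalue equation, exploiting the fact that for normal matrices the right eigenvector of a simple eigenvalue is also the corresponding left eigenvector. First I would invoke standard analytic perturbation theory (e.g.\ the implicit function theorem applied to the characteristic polynomial, together with a normalized selection of the eigenprojector) to conclude that because $\lambda(t)$ is simple and $C(t)$ is $C^1$, both $\lambda(t)$ and $x(t)$ (with the normalization $\|x(t)\|=1$) can be chosen continuously differentiable on a neighbourhood of any fixed $t$. This step is classical and is the reason the statement is cited from Kato rather than re-derived in detail.

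Next I would use normality. If $C(t)$ is normal and $C(t) x(t) = \lambda(t) x(t)$, then $C(t)^H x(t) = \overline{\lambda(t)}\, x(t)$, so taking conjugate transposes gives the left eigenvalue relation $x(t)^H C(t) = \lambda(t)\, x(t)^H$. Thus $x(t)$ serves simultaneously as right and left eigenvector, which is precisely what is needed to cancel the derivative of $x(t)$ in the next step.

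The main computation is then to differentiate the identity $C(t) x(t) = \lambda(t) x(t)$ with respect to $t$, obtaining
\begin{equation*}
\dot C(t)\, x(t) + C(t)\, \dot x(t) = \dot\lambda(t)\, x(t) + \lambda(t)\, \dot x(t),
\end{equation*}
and then multiply on the left by $x(t)^H$. Using the left eigenvector relation the terms $x(t)^H C(t)\, \dot x(t)$ and $\lambda(t)\, x(t)^H \dot x(t)$ cancel, leaving
\begin{equation*}
x(t)^H \dot C(t)\, x(t) = \dot\lambda(t)\, x(t)^H x(t).
\end{equation*}
The normalization $\|x(t)\|=1$ gives $x(t)^H x(t)=1$ and yields the claimed formula \eqref{eq:deigval}. (As a sanity check, differentiating $x(t)^H x(t)=1$ shows that $\operatorname{Re}(x(t)^H \dot x(t))=0$, confirming that the cancellation is consistent with the chosen normalization.)

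The only real obstacle is the smoothness of the eigenpair, which requires simplicity in an essential way: a crossing of eigenvalues would destroy differentiability of the eigenvector and generically also of $\lambda(t)$. Since simplicity is assumed for all $t$ and $C(t)$ is $C^1$, this obstacle is removed by the perturbation-theoretic result mentioned above, and the rest of the argument is a short algebraic manipulation.
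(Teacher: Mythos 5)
Your proof is correct and is essentially the standard argument that underlies the result as stated in Kato; the paper itself simply cites \cite[Section II.1.1]{Kat95} and does not reproduce a proof, so there is no in-paper argument to compare against. You correctly isolate the three ingredients: (a) differentiability of the simple eigenvalue and a suitable normalized eigenvector selection, for which citing perturbation theory is appropriate (and indeed the formula $x^H\dot C x$ is invariant under the residual phase freedom in that selection, so the gauge choice does not affect the conclusion); (b) the fact that normality converts the right eigenvector into a left eigenvector, $x^H C = \lambda x^H$; and (c) differentiating $Cx=\lambda x$, left-multiplying by $x^H$, and cancelling the $\dot x$ terms. The sanity check $\operatorname{Re}(x^H\dot x)=0$ is a nice consistency remark but is not actually needed, since the cancellation uses only $x^H C=\lambda x^H$ and $x^Hx=1$, not the value of $x^H\dot x$.
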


For $A\in \Sym^{n,n}$ consider a perturbation matrix $\eps \Delta(t) \in \Sym^{n,n}$ that depends on a real parameter $t$. By Lemma~\ref{lem:eigderiv}, for a simple eigenvalue $\lambda(t) \in \R$ of $A+\eps \Delta(t)$
with associated eigenvector $x(t)$,  $\|x(t)\|=1$, we have (omitting the dependence on $t$)
\begin{equation}
\frac12 \frac d{dt}\l^2 =  \eps\,\l\,x^\top \dot \Delta x = \eps\,\l\,\langle x x^\top, \dot \Delta \rangle.
\label{eq:lambda1}
\end{equation}
%
%
%
Similarly, which is  needed in the case that $n$ is even,  for all $t$, if   $\iu \mu(t) \in \iu \R$ ($\mu(t) \ge 0$) is  a simple eigenvalue of a  matrix-valued function
$B+\eps \Theta(t) \in \Skew^{n,n}$, with associated eigenvector $w(t)$, $\| w(t) \| = 1$, then we have
\begin{equation}
\frac12 \frac d{dt}|\mu|^2 = \eps\,\mu\,\langle \iu w w^H, \dot \Theta \rangle = - \eps\,\mu\,\langle \Im \left(w w^H\right), \dot \Theta \rangle.
\label{eq:lambda2}
\end{equation}
To derive the gradient system associated with our optimization problem, we make use of the following definition.
\begin{definition}
\label{def:groupinv}
Let $M\in {\mathbb C}^{n,n}$ be a singular matrix with a simple zero eigenvalue.
The \emph{group inverse}(reduced resolvent) of $M$ is the unique matrix
$G$ satisfying
\[
MG =GM, \qquad GMG=G, \quad \mbox{and} \quad MGM=M.
\]
\end{definition}
It is well-known, see \cite{MeyS88}, that for a singular and normal matrix  $M\in {\mathbb C}^{n,n}$ with simple eigenvalue zero, its
 group inverse $G$ is equal to the Moore-Penrose pseudoinverse $M^+$. We have the following Lemma.
\begin{lemma} \label{lem:eigvecderiv} {\rm \cite[Theorem 2]{MeyS88}}
Consider  a sufficiently often differentiable  matrix function
\[
C:\mathbb R \to \mathbb C^{n, n}.
\]
Let $\lambda(t)$ be a simple eigenvalue of $C(t)$ for all $t$ and let $x(t)$, with $\|x(t)\|=1$
be the associated right eigenvector function.
Moreover, let $M(t) = C(t) - \lambda (t) I$ and let $G(t)$ be the group inverse of $M(t)$. Then
$x(t)$ satisfies the system of differential equations
\begin{equation}
\dot x  =  x\,x^H G(t)\dot M(t) x  - G(t) \dot M(t) x.
\label{eq:deigvecg}
\end{equation}
Moreover, if $C(t)$ is pointwise normal, then
\begin{equation}
\dot x(t) = - G(t) \dot M(t) x(t)
\label{eq:deigvec}
\end{equation}
\end{lemma}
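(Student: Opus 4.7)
The plan is to differentiate the defining eigenvector relation $M(t) x(t) = 0$ and then invert the resulting singular linear system for $\dot x$ using the three defining relations of the group inverse. Throughout, I would impose the phase gauge $x^H \dot x = 0$: unit normalization $\|x\|=1$ only enforces $\Re(x^H \dot x)=0$, while $\Im(x^H \dot x)=0$ fixes the residual phase freedom of $x(t)$, and it is precisely this gauge that is compatible with the asserted formulas.

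Differentiating $M x = 0$ gives $M \dot x = -\dot M x$. Multiplying on the left by $G$ and using the identity $GM = I - P$, where $P$ is the spectral projector onto $\ker M = \Span(x)$ (this identity is a direct consequence of $MG = GM$, $MGM = M$ and $GMG = G$), I obtain
\[
\dot x \;=\; P \dot x - G \dot M x.
\]
Since $\ker M = \Span(x)$, the term $P \dot x$ is a scalar multiple of $x$. Under the gauge $x^H \dot x = 0$ the vector $\dot x$ is orthogonal to $x$, so the component of the right-hand side along $x$ must cancel. The orthogonal projection of $-G \dot M x$ onto $\Span(x)$ is $-x\,(x^H G \dot M x)$, hence $P \dot x = x\,(x^H G \dot M x)$, and substituting back yields the first assertion
\[
\dot x \;=\; x x^H G \dot M x - G \dot M x.
\]

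For the normal case I would first establish the auxiliary fact that $G x = 0$ in general: since $M G x = G M x = 0$, the vector $G x$ lies in $\ker M$, so $P(Gx) = Gx$; on the other hand, $G M G = G$ combined with $G M = I - P$ gives $P G = 0$, so $P(Gx) = 0$. These two equalities force $G x = 0$. When $C$ is normal, $G$ inherits normality (it is a functional-calculus expression in $C$), and for normal operators $G x = 0$ implies $x^H G = 0$. Therefore $x^H G \dot M x = 0$, the first term in the general formula vanishes, and $\dot x = - G \dot M x$, as claimed. The main technical obstacle is the careful justification of the spectral identity $GM = I - P$ and the auxiliary equality $Gx = 0$ from the group-inverse axioms alone, since in the non-normal case the projector $P$ is oblique and must be handled without appeal to orthogonality; once these tools are in place, the remainder is a routine orthogonal decomposition along $x$.
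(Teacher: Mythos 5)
The paper states this lemma purely as a citation of \cite[Theorem 2]{MeyS88} and supplies no proof of its own, so there is no in-paper argument to compare against; your reconstruction therefore has to be judged on its own merits, and it is correct. Differentiating $Mx=0$, left-multiplying by $G$, and using $GM=I-P$ with $P=I-GM$ the (in general oblique) eigenprojector onto $\ker M$ along $\mathrm{range}(M)$ gives $\dot x = P\dot x - G\dot M x$; imposing $x^H\dot x=0$ then fixes $P\dot x = x\,x^H G\dot M x$ and yields \eqref{eq:deigvecg}. Your auxiliary identity $Gx=0$ is proved correctly from the group-inverse axioms, and in the normal case $G$ is indeed a function of $M$ and hence normal, so $Gx=0$ forces $x^HG=0$, which kills the first term and gives \eqref{eq:deigvec}. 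The one thing worth flagging is that you made explicit a point the lemma statement leaves silent: the condition $\|x(t)\|=1$ only constrains $\Re(x^H\dot x)$, and \eqref{eq:deigvecg} is precisely the representative in the residual $U(1)$ gauge freedom with $x^H\dot x=0$ (as one checks by left-multiplying the formula by $x^H$). That observation is a genuine clarification, not a deviation, and it is exactly what makes the "component along $x$" step in your derivation legitimate.
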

After these preparations, in the following sections we  determine the associated gradient systems for the functionals \eqref{Fodd}, respectively \eqref{Feven}.

\section{Gradient flow, odd state dimension}\label{sec:gradnodd}
In this section we consider the case that the state dimension  is odd and construct the gradient system optimization algorithm for the functional \eqref{Fodd}.

\subsection{Computation of the gradient}\label{sec:compgrad}
The functional $F_\eps^\od(\Delta,\Theta,\Gamma)$ in \eqref{Fodd} has several  parts.
Applying Lemma~\ref{lem:eigderiv} for perturbations $\eps \Delta(t)$ of $E$ and $\eps \Theta (t)$ of $R$,  the computation of the gradient of the part $\frac12 \Big(\lambda^2 + \nu^2\Big )$ is obtained from the expressions
\begin{eqnarray*}
\frac12 \frac d{dt} \lambda^2  =  \eps\,\l\, \langle x x^\top, \dot \Delta \rangle,\quad
\frac12 \frac d{dt} \nu^2  =  \eps\,\nu\, \langle u u^\top, \dot \Theta \rangle.
\end{eqnarray*}
Considering orthogonal projections with respect to the Frobenius inner product onto the  matrix manifold
$\Sym^{n,n}$,
we identify the constrained gradient directions of these terms as
\begin{equation*}
\dot \Delta \propto \l\,   x x^\top, \qquad
\dot \Theta \propto \nu\,  u u^\top,
\end{equation*}
respectively. (Here $\propto$ denotes proportionality.) In order to treat the other terms, we observe that
\begin{equation*}
\frac12 \frac d{dt} \left(| x^\top u |^2 \right) =
\frac12 \frac d{dt} \left( x^\top u u^\top x \right) =
x^\top u u^\top \dot x  + u^\top x x^\top \dot u,
\end{equation*}
and thus
\begin{eqnarray*}
\frac12 \frac d{dt} \left(1-| x^\top u |^2 \right)
& = & \eps \left(
(x^\top u) u^\top G \dot \Delta x  +
(u^\top x) x^\top N \dot \Theta u
\right)
\nonumber
\\
& = & \eps \left(
\Big\langle 
\theta\, G^\top u x^\top, \dot \Delta \Big\rangle +
\Big\langle 
\theta\, N^\top x u^\top, \dot \Theta \Big\rangle
\right),
\label{eq:xtu}
\end{eqnarray*}
where $\theta = x^\top u$, $G$ is the pseudoinverse of $E+\eps \Delta - \lambda I$, and $N$ is the pseudoinverse of
$R+\eps \Theta - \nu I$.

Since $n$ is odd, which means that (generically) $0$ is a simple eigenvalue of $J + \eps \Gamma$,
for the last term of  \eqref{Fodd} we have
\begin{eqnarray*}
\frac12 \frac d{dt} \left(1-( x^\top w )^2 \right)
& = &
\eps \left(
\Big\langle 
\eta\, G^\top w x^\top, \dot \Delta \Big\rangle +
\Big\langle 
\eta\, P^\top x w^\top, \dot \Gamma \Big\rangle
\right),
\end{eqnarray*}
where $\eta = x^\top w$ and $P$ is the pseudoinverse of $J+\eps \Gamma$.

\subsection{The gradient system of ODEs for the flow in the odd case}\label{sec:odd}
In order to compute the steepest descent direction, we minimize the gradient of $F_\eps$ and collect the
summands involving $\dot \Delta$, $\dot \Theta$ and those involving $\dot \Gamma$.
Letting
\begin{eqnarray}
\nonumber
p & = & 
\theta G^\top u \ + 
\eta G^\top w,
\\
\label{eq:pqr}
q & = & 
\theta N^\top x,
\\
r & = & 
\eta P^\top x,
\nonumber
\end{eqnarray}
we have
\begin{eqnarray}
\frac d{dt} F_\eps(\Delta,\Theta,\Gamma) & = &
\eps\,\langle \left(\lambda x + p \right)\,x^\top, \dot \Delta \rangle
+
\eps\,\langle \left( \nu u + q \right)\,u^\top, \dot \Theta \rangle
+
\eps\,\langle  r \,w^\top, \dot \Gamma \rangle
\label{eq:freegrad}
\\
& = &
\eps \left( \Sym  \left(\langle \left(\lambda x + p \right)\,x^\top \right), \dot \Delta \rangle +
\langle \Sym  \left( \left( \nu u + q \right)\,u^\top \right), \dot \Theta \rangle +
\eps\,\langle  \Skew  \left( r \,w^\top \right), \dot \Gamma \rangle \right),
\nonumber
\end{eqnarray}
where we have used the structural properties of $\dot \Delta, \dot \Theta$ (symmetric) and $\dot \Gamma$
(skew-symmetric) and the property that for real matrices $A$ and $B$, $\langle \Sym(A), \Skew(B) \rangle = 0$.
Equation \eqref{eq:freegrad} identifies the 
gradient of the functional,
\begin{equation} \label{eq:G}
\G = \left( \Sym\left( \left(\lambda x + p \right)\,x^\top\right),
            \Sym\left( \left( \nu u + q \right)\,u^\top \right),
						\Skew\left( r \,w^\top \right) \right) := \left( \G_E, \G_R, \G_J \right).
\end{equation}
Since we want to impose a norm constraint on the perturbation $(\Delta,\Theta,\Gamma)$ we need the
following result.
\begin{lemma}[Direction of steepest admissible ascent]
\label{lem:opt}
Let $\G =\left( \G_E, \G_R, \G_J \right) \in\R^{n, 3 n}$, $\Z = \left( Z_1,Z_2,Z_3 \right), \M=\left( \Delta, \Theta,\Gamma \right) \in\R^{n, 3 n}$
with ${\|(\Delta,\Theta,\Gamma)\|_F=1}$.
A solution of the optimization problem
\begin{eqnarray}
\Z_\star  & = & \arg\min_{\|\Z\|_F=1,\,\,\langle \M, \Z \rangle=0} \ \langle  \G,  \Z \rangle \\
& \text{subj. to } & \Delta , \Theta \in \Sym^{n,n}, \Gamma \in \Skew^{n,n},  \nonumber \\
\label{eq:opt}
\end{eqnarray}
is given by 
\begin{eqnarray}
\mu \Z_\star & = & -\G + \varrho\, \M,
\label{eq:Zopt}
\\
\varrho & = & \left( \langle \Delta,  \Sym  \left( (\lambda x  + p)\,x^\top \right) \rangle +
                     \langle \Theta,  \Sym  \left( (\nu  u  + q)\,u^\top \right) \rangle +
					           \langle \Gamma,  \Skew \left(  r\,w^\top \right) \rangle \right)
\nonumber
\end{eqnarray}
where $\mu$ is the Frobenius norm of the matrix on the right-hand side. 
The solution is unique if $\G$ is not a multiple of $\M$.
\end{lemma}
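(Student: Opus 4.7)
The plan is to recognize this as a constrained linear optimization over the intersection of the unit Frobenius sphere and a hyperplane, both taken inside the linear subspace $\mathcal V$ of structured triples $(Z_1, Z_2, Z_3)$ with $Z_1, Z_2 \in \Sym^{n,n}$ and $Z_3 \in \Skew^{n,n}$. First I would observe that both $\M$ (by hypothesis) and $\G$ (by its definition in \eqref{eq:G}, where every component is explicitly a $\Sym$ or $\Skew$ projection) already lie in $\mathcal V$, so the structural constraints on $\Z$ can be treated implicitly by working in the Hilbert space $\mathcal V$ equipped with the Frobenius inner product. In this setting the problem reduces to: minimize the linear functional $\Z \mapsto \langle \G, \Z \rangle$ over the set $\{\Z \in \mathcal V : \|\Z\|_F = 1,\ \langle \M, \Z\rangle = 0\}$.

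Next I would apply Lagrange multipliers. The Lagrangian
\[
\mathcal{L}(\Z, \mu, \varrho) = \langle \G, \Z \rangle + \tfrac{\mu}{2}\bigl(1 - \|\Z\|_F^2\bigr) + \varrho\, \langle \M, \Z \rangle
\]
has gradient (with respect to $\Z \in \mathcal V$) equal to $\G - \mu\,\Z + \varrho\, \M$. Setting this to zero yields the stationarity condition
\[
\mu\,\Z = \G + \varrho\,\M,
\]
and the two multipliers $\mu, \varrho$ are determined by the two constraints. Imposing $\langle \M, \Z\rangle = 0$ together with $\|\M\|_F = 1$ gives $\varrho = -\langle \G, \M\rangle$, and $\mu$ is fixed in absolute value by the norm constraint $\|\Z\|_F = 1$. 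The sign of $\mu$ is chosen so as to minimize (rather than maximize) $\langle \G, \Z\rangle$: out of the two stationary points $\pm (\G - \langle \G,\M\rangle \M)/\|\G - \langle \G,\M\rangle\M\|_F$, the minimizer is the one for which $\langle \G, \Z_\star\rangle = -\|\G - \langle \G,\M\rangle \M\|_F \le 0$, equivalently
\[
\mu\, \Z_\star = -\G + \varrho\, \M, \qquad \mu = \|-\G + \varrho \M\|_F \ge 0,
\]
with $\varrho = \langle \G, \M\rangle$. A short calculation expanding $\langle \G, \M\rangle$ component-wise, using the definitions of $\G_E, \G_R, \G_J$ in \eqref{eq:G}, recovers the expression for $\varrho$ displayed in the statement.

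Finally I would address uniqueness. If $\G$ is not a scalar multiple of $\M$, then $\G - \langle\G,\M\rangle\,\M \neq 0$ (it is the nonzero orthogonal projection of $\G$ onto $\M^\perp$), so $\mu > 0$ and $\Z_\star$ is uniquely determined by normalization. If $\G = c\M$ for some $c \in \R$, then $-\G + \varrho \M = 0$ and any unit-norm $\Z \in \mathcal V \cap \M^\perp$ achieves the same (zero) value of the functional, so the minimizer is not unique. The main technical point, and the only step requiring care, is the verification that the structural constraints ($Z_1, Z_2$ symmetric, $Z_3$ skew-symmetric) do not generate additional Lagrange multipliers: this is immediate because $\mathcal V$ is a linear subspace and both $\G$ and $\M$ lie in it, so the gradient of the Lagrangian restricted to $\mathcal V$ coincides with the ambient gradient, and no projection onto $\mathcal V$ is needed.
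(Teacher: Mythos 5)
Your proof is correct and reaches the same conclusion as the paper's by essentially the same geometric idea: the minimizer is the negatively-normalized orthogonal projection of $\G$ onto the orthogonal complement of $\M$ inside the structured subspace. The paper simply invokes this projection principle directly in one sentence, while you derive it through Lagrange multipliers and, helpfully, make explicit the point the paper leaves implicit---that because $\G$ and $\M$ already lie in the subspace $\mathcal V$ of $(\Sym,\Sym,\Skew)$-structured triples, the structural constraints add no new multipliers and the ambient projection formula applies unchanged.
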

\begin{proof}
The result follows on noting that the function to minimize is a real inner product on $\R^{n,3n}$, and the real inner product with a given vector
(which here is a matrix) is minimized over a subspace by orthogonally projecting the vector onto that subspace. The expression in (\ref{eq:Zopt}) is
the orthogonal projection of $G$ to the tangent space at $\M$ of the manifold of matrices of unit Frobenius norm.
\end{proof}

Taking into consideration projection with respect to the Frobenius inner product of the vector field onto the manifolds of symmetric and skew-symmetric matrices, this leads to the system of differential equations for the perturbation matrices
\begin{eqnarray}
\dot \Delta & = & - \Sym  \left( \left(\lambda x  + p \right)\,x^\top \right)  + \varrho \Delta, \nonumber
\\[2mm]
\dot \Theta & = & - \Sym  \left( \left(\nu  u  + q \right)\,u^\top \right)  + \varrho \Theta,
\label{eq:ode-o}
\\[2mm]
\dot \Gamma & = & - \Skew \left( r \,w^\top \right)  + \varrho \Gamma, \nonumber
\end{eqnarray}
where, for $X \in \R^{n,n}$, $\Sym(X) = \frac{X+X^\top}{2}$, $\Skew(X) = \frac{X-X^\top}{2}$, and
\[
\varrho = \left( \langle \Delta,  \Sym  \left( (\lambda x  + p)\,x^\top \right) \rangle +
                     \langle \Theta,  \Sym  \left( (\nu  u  + q)\,u^\top \right) \rangle +
					           \langle \Gamma,  \Skew \left(  r\,w^\top \right) \rangle \right)
\]
is used to ensure the norm conservation, i.e. $\langle ( \dot\Delta, \dot\Theta, \dot\Gamma ), \left( \Delta, \Theta, \Gamma \right) \rangle = 0$.

\begin{theorem} \label{thm:monotone}
Let $(\Delta(t),\Theta(t),\Gamma(t))$ of unit Frobenius norm satisfy the differential equation \eqref{eq:ode-o}.
If $\l(t)$ is a simple eigenvalue of $A+\eps \Delta(t)$, then
\begin{eqnarray}
\frac{d}{dt} F_\eps\left( \Delta(t),\Theta(t),\Gamma(t) \right) & \le & 0.
\label{eq:neg}
\end{eqnarray}
\end{theorem}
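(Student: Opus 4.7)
The plan is to combine the expression \eqref{eq:freegrad} for the total derivative of $F_\eps$ along a curve $(\Delta(t),\Theta(t),\Gamma(t))$ with the right-hand side of the ODE system \eqref{eq:ode-o}, and then conclude by Cauchy--Schwarz.

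First, I would invoke \eqref{eq:freegrad} together with the definition \eqref{eq:G} of the free gradient $\G=(\G_E,\G_R,\G_J)$. Since $\dot\Delta,\dot\Theta$ are symmetric and $\dot\Gamma$ is skew-symmetric by construction of \eqref{eq:ode-o}, the orthogonality of $\Sym^{n,n}$ and $\Skew^{n,n}$ under $\langle \cdot,\cdot\rangle$ lets me write
\begin{equation*}
\frac{d}{dt}F_\eps(\Delta,\Theta,\Gamma)=\eps\bigl(\langle\G_E,\dot\Delta\rangle+\langle\G_R,\dot\Theta\rangle+\langle\G_J,\dot\Gamma\rangle\bigr)=\eps\,\langle\G,(\dot\Delta,\dot\Theta,\dot\Gamma)\rangle.
\end{equation*}
Here I am implicitly assuming the simplicity of $\lambda(t)$ (hypothesis of the theorem), together with the analogous simplicity of $\nu(t)$ and of the zero eigenvalue of $J+\eps\Gamma(t)$, so that Lemmas \ref{lem:eigderiv} and \ref{lem:eigvecderiv} apply and the derivation leading to \eqref{eq:freegrad} is justified.

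Next I would substitute the ODE \eqref{eq:ode-o}, namely $(\dot\Delta,\dot\Theta,\dot\Gamma)=-\G+\varrho\,\M$ with $\M=(\Delta,\Theta,\Gamma)$. This yields
\begin{equation*}
\frac{d}{dt}F_\eps=\eps\bigl(-\|\G\|_F^2+\varrho\,\langle\G,\M\rangle\bigr).
\end{equation*}
Recognizing from \eqref{eq:ode-o} that the scalar $\varrho$ is exactly $\langle\G,\M\rangle$ (this is the whole point of the Lagrange-type term enforcing $\|\M\|_F=1$), the right-hand side becomes $\eps\bigl(-\|\G\|_F^2+\varrho^2\bigr)$.

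Finally, Cauchy--Schwarz applied to the Frobenius inner product on $\mathbb{R}^{n,3n}$, combined with the constraint $\|\M\|_F=1$, gives
\begin{equation*}
\varrho^2=\langle\G,\M\rangle^2\le\|\G\|_F^2\,\|\M\|_F^2=\|\G\|_F^2,
\end{equation*}
so $\frac{d}{dt}F_\eps\le 0$, with equality iff $\G$ is a scalar multiple of $\M$, i.e.\ at equilibria of \eqref{eq:ode-o}. The argument is essentially bookkeeping once \eqref{eq:freegrad} is in hand; the only delicate point is to ensure that the simplicity hypotheses underlying \eqref{eq:freegrad} remain valid at time $t$, which the statement grants for $\lambda$ and which is generic for the other two eigenvalues along the flow.
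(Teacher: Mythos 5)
Your proof is correct and follows exactly the route the paper intends: the paper's one-line proof (``the result follows since \eqref{eq:ode-o} is a constrained gradient system'') is shorthand for precisely the substitution of \eqref{eq:ode-o} into \eqref{eq:freegrad}, the identification $\varrho=\langle\G,\M\rangle$, and the Cauchy--Schwarz step you carry out. You have simply spelled out what the authors leave implicit, including the correct remark that the simplicity hypotheses on $\nu$ and the zero eigenvalue of $J+\eps\Gamma$ are needed alongside that on $\lambda$ for \eqref{eq:freegrad} to be valid.
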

\proof
The result follows directly by the fact that \eqref{eq:ode-o} is a constrained gradient system.
\eproof

In this way we have preserved the symmetry of $E,R$ and the skew-symmetry of $J$.
It may happen however, that along the solution trajectory of \eqref{eq:ode-o}, due to the projection on the matrix manifolds the smallest eigenvalue
$\nu$ of $R + \eps \Theta$ and/or the smallest eigenvalue $\lambda$ of $E+\eps \Delta$ become negative. In this case the perturbed system is not a dissipative Hamiltonian system any longer. This, however, is in general not an issue for the optimization algorithm, since the dynamical gradient system leads to eigenvalues
$\nu$, $\lambda$, with $|\nu|$ as small as possible, for a given $\eps$, and thus drives them to zero when $\eps=\eps^*$, so that in the limiting situation also the positive semidefiniteness of $E + \eps \Delta$ and $R+\eps \Theta$ holds.

\subsection{Stationary points of \eqref{eq:ode-o} and low rank property}\label{sec:statpt}
In this subsection we discuss the existence of stationary points of  the solution trajectory of \eqref{eq:ode-o}.
%
%
\begin{lemma}
\label{lem:nonzero} Let $\eps$ be fixed and $F_\eps^\od(\Delta,\Theta,\Gamma) > 0$. Let $\lambda$ be a simple eigenvalue  of $E+\eps \Delta$ with associate normalized eigenvector $x$, let $\nu$ be a simple eigenvalue  of $R+\eps \Theta$ with  associate normalized eigenvector $u$,  and let $0$ be a simple eigenvalue  of $J+\eps \Gamma$ with associate normalized eigenvector $w$. Then, in the generic situation, i.e., if
 $\lambda,\nu \neq 0$, $\theta = x^\top u \neq 0$, and $\eta = x^\top w \in (0,1)$, we have
\begin{equation}
\lambda x + p \neq 0, 
\qquad  \nu u  + q \neq 0, \qquad \mbox{and} \qquad  r \neq 0.
\label{eq:ayvbbw}
\end{equation}
\end{lemma}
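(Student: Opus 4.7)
The plan is to exploit the structural (symmetry/skew–symmetry) properties of the pseudoinverses $G$, $N$, $P$ together with the fact that their kernels are, generically, the one-dimensional spans of the corresponding eigenvectors $x$, $u$, $w$. For the first two assertions I will simply contract the candidate vector against the eigenvector it is built from and read off the relevant eigenvalue; for the third I will use that the pseudoinverse of a skew-symmetric matrix is skew-symmetric, combined with $|x^\top w|<1$.

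More concretely, first I would record that since $E+\eps\Delta$ is symmetric with $\lambda$ a simple eigenvalue and normalized eigenvector $x$, its pseudoinverse $G=(E+\eps\Delta-\lambda I)^+$ is symmetric and satisfies $Gx=0$; analogously $N$ is symmetric with $Nu=0$, and $P$ is skew-symmetric (because $J+\eps\Gamma$ is) with $Pw=0$ and $\ker P=\ker(J+\eps\Gamma)^\top=\ker(J+\eps\Gamma)=\Span\{w\}$. Using $G^\top=G$ and $Gx=0$, I compute
\[
x^\top(\lambda x+p)=\lambda\|x\|^2+\theta\,x^\top G u+\eta\,x^\top G w=\lambda,
\]
so $\lambda\ne 0$ forces $\lambda x+p\ne 0$. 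The same contraction against $u$, using $N^\top=N$ and $Nu=0$, yields
\[
u^\top(\nu u+q)=\nu\|u\|^2+\theta\,u^\top N x=\nu,
\]
so $\nu\ne 0$ forces $\nu u+q\ne 0$.

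For the third assertion, since $P^\top=-P$ we have $r=\eta P^\top x=-\eta Px$, and with $\eta\ne 0$ it suffices to show $Px\ne 0$. By the characterization above, $Px=0$ would mean $x\in\Span\{w\}$, i.e.\ $x=\pm w$ (both are unit vectors); but then $|\eta|=|x^\top w|=1$, contradicting the hypothesis $\eta\in(0,1)$. Hence $Px\ne 0$ and consequently $r\ne 0$.

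I do not expect a serious obstacle here: the argument is essentially one line per case once the symmetry/skew-symmetry of the pseudoinverses and their kernels are recorded. The only genuinely distinct step is the third, which requires the pseudoinverse of a skew matrix to be skew (so that the term $P^\top x$ behaves differently from $G^\top u$ and $N^\top x$) and the strict inequality $\eta<1$ to rule out $x\parallel w$; if one merely assumed $\eta\ne 0$ the conclusion could fail. It may be worth stating the kernel identifications and the symmetry/skew-symmetry of $G,N,P$ explicitly before the three verifications, so that the genericity hypotheses ($\lambda,\nu\ne 0$, $\theta\ne 0$, $\eta\in(0,1)$) are seen to match exactly the conditions used.
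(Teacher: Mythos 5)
Your proof is correct and follows essentially the same approach as the paper: contract the candidate vector against the eigenvector it is built from, use $Gx=0$ (resp.\ $Nu=0$) to kill $p$ (resp.\ $q$) and read off the nonzero eigenvalue, and for $r$ use that $\eta\in(0,1)$ prevents $x$ and $w$ from being aligned so that $x\notin\ker P$. The only cosmetic difference is that you invoke symmetry/skew-symmetry of $G,N,P$ to simplify $x^\top G^\top u$ etc., whereas the paper reaches $x^\top p=0$ directly from $(Gx)^\top u=0$ without needing $G^\top=G$; you also spell out the kernel identification for $P$ that the paper states more tersely.
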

\begin{proof}
The proofs for the three cases are similar.
\begin{itemize}
\item[(i) ] 
Exploiting the property that $G x = 0$, see \cite{MeyS88}, we obtain that
$x^\top p = 0$.
If we had
$x^\top \left( \lambda x + p \right) = 0$, then this would imply  $\lambda x^\top x = 0$ and thus, since $\lambda \neq 0$, we get a contradiction, since $\| x\|=1$.
\item[(ii) ] Exploiting the property $N u = 0$, we obtain that
$u^\top q = 0$.
If we had $u^\top \left( \nu u  + q \right) = 0$, then  we get  $\nu u^\top u =0$, and again we have a contradiction.
\item[(iii) ] Having assumed $\eta = x^\top w \neq 1$ we have that $x$ and $w$ are not aligned. As a consequence
$\eta P^\top x \neq 0$. \eproof
\end{itemize}
Using Lemma~\ref{lem:nonzero}, we have the following characterization of stationary points..
\begin{theorem}\label{stat:odd}
Let $(\Delta(t),\Theta(t),\Gamma(t))$ of unit Frobenius norm satisfy the differential equation \eqref{eq:ode-o}.
Moreover, suppose that for all $t$
\[
F_\eps^\od(\Delta(t),\Theta(t),\Gamma(t)) > 0.
\]
and that  $0 \neq \lambda(t) \in \R$ is a simple eigenvalue of $E+\eps \Delta(t)$ with normalized eigenvector $x(t)$,
that $0 \neq \nu(t) \in \R$ is a simple eigenvalue of $R+\eps \Theta(t)$ with associated eigenvector $u(t)$,
and that $J+\eps \Gamma(t)$ has a null vector $w(t)$.

Then the following are equivalent (here we omit the argument~$t$):
\begin{itemize}
\item[{\rm ($1$)} ] $\displaystyle{\frac{d}{d t}} F_\eps^\od(\Delta,\Theta,\Gamma) =0$;
\smallskip
\item[{\rm ($2$)} ] $\dot \Delta=0$, $\dot \Theta=0$, $\dot \Gamma=0$;
\smallskip
\item[{\rm ($3$)} ] $\Delta$ is a 
multiple of the rank-$2$ matrix $\Sym  \left( \left(\lambda x  + p \right)\,x^\top \right)$;
$\Theta$ is a 
multiple of the rank-$2$ matrix $\Sym  \left( \left(\nu  u  + q \right)\,u^\top \right)$;
$\Gamma$ is a 
multiple of the rank-$2$ matrix $\Skew \left( r \,w^\top \right)$ with $p,q,r$ given by
\eqref{eq:pqr}.
\end{itemize}
\end{theorem}

The proof follows directly by equating to zero the right hand side of \eqref{eq:ode-o} and by Lemma
\ref{lem:nonzero}, which prevents the matrices to be zero.
\end{proof}

We have also the following extremality property.
\begin{theorem}\label{thm:ext}
Consider the functional \eqref{Fodd} and suppose that $F_\eps^\od(\Delta,\Theta,\Gamma) > 0$.
Let $\Delta_* \in \Sym^{n,n},  \Theta_* \in \Sym^{n,n}$ and $\Gamma_* \in \Skew^{n,n}$
with $\| \left(\Delta_*,\Theta_*,\Gamma_*\right) \|_F = 1$.
Let $0 \neq \lambda_* \in \R$ be a simple eigenvalue of $E+\eps \Delta_*$ with associated eigenvector $x$,
let $0 \neq \nu_* \in \R$ be a simple eigenvalue of $R+\eps \Theta_*$ with associated eigenvector $u$, and
let $J+\eps \Gamma_*$ have a null vector $w$.
Then the following  are equivalent:
\begin{itemize}
\item[{\rm (i)} ] Every differentiable path $(\Delta(t),\Theta(t),\Gamma(t))$ (for small $t\ge 0$)
with the properties that
$\| (\Delta(t),\Theta(t),\Gamma(t)) \|_F\le 1$, that both $\lambda(t)$ and $\nu(t)$
are simple eigenvalues of $E+\eps \Delta(t)$ and $R+\eps \Theta(t)$,  with associated eigenvectors $x(t)$ and $u(t)$, respectively, and
for which $w(t)$ is the null vector of $J + \eps \Gamma(t)$,
so that $\Delta(0)=\Delta_{*}, \Theta(0)=\Theta_{*}, \Gamma(0)=\Gamma_{*}$, satisfies
\[
\frac{d}{dt} F_\eps^\od(\Delta(t),\Theta(t),\Gamma(t))  \ge 0.
\]
\item[{\rm (ii)} ]
The matrix $\Delta_{*}$ is a 
multiple of the rank two matrix $\Sym  \left( \left(\lambda x  + p \right)\,x^\top \right)$,
$\Theta_*$ is a 
multiple of the rank two matrix $\Sym  \left( \left(\nu  u  + q \right)\,u^\top \right)$, and
$\Gamma_*$ is a 
multiple of the rank two matrix $\Skew \left( r \,w^\top \right)$ with $p,q,r$ given by
\eqref{eq:pqr}.
\end{itemize}
\end{theorem}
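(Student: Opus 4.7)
The plan is to recognize that condition~(i) is the first-order optimality condition at the boundary minimizer $\M_* = (\Delta_*,\Theta_*,\Gamma_*)$ of $F_\eps^\od$ on the admissible ball $\{\M \in \Sym^{n,n}\times\Sym^{n,n}\times\Skew^{n,n} : \|\M\|_F \le 1\}$, and then translate that condition into the algebraic relations in~(ii) via the structured gradient already derived in Section~\ref{sec:compgrad}.

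\emph{Step 1.} I would reuse Lemmas~\ref{lem:eigderiv} and~\ref{lem:eigvecderiv} to conclude that, for every differentiable path $\M(t) = (\Delta(t),\Theta(t),\Gamma(t))$ with $\M(0) = \M_*$ along which the simple eigenvalues $\lambda(t),\nu(t)$ and the null vector $w(t)$ vary smoothly, one has
\[
\tfrac{d}{dt} F_\eps^\od(\M(t))\big|_{t=0} \;=\; \eps\,\langle \G(\M_*),\dot\M(0) \rangle,
\]
where $\G = (\G_E,\G_R,\G_J)$ is the structured gradient in~\eqref{eq:G}. Differentiating the inequality $\|\M(t)\|_F^2 \le 1 = \|\M_*\|_F^2$ at $t = 0$ identifies the admissible tangent cone at the boundary point $\M_*$ as the half-space $\{\dot\M \in \Sym^{n,n}\times\Sym^{n,n}\times\Skew^{n,n} : \langle \M_*,\dot\M\rangle \le 0\}$.

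\emph{Step 2.} I would then apply the elementary polarity/KKT argument: the assertion that $\langle \G,\dot\M\rangle \ge 0$ holds for every $\dot\M$ in this half-space is equivalent to the existence of a scalar $\alpha \ge 0$ with $\G(\M_*) = -\alpha \M_*$, that is,
\[
\Sym\big((\lambda x+p)x^\top\big) = -\alpha \Delta_*,\quad
\Sym\big((\nu u+q)u^\top\big) = -\alpha \Theta_*,\quad
\Skew\big(r w^\top\big) = -\alpha \Gamma_*.
\]
Since the symmetrization or antisymmetrization of a rank-one outer product has rank at most two, these identities force $\Delta_*,\Theta_*,\Gamma_*$ to be scalar multiples (sharing the common factor $-1/\alpha$) of rank-$\le 2$ matrices of the exact form claimed in~(ii). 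This proves (i)$\Rightarrow$(ii). For the converse, substituting the assumed form of $\M_*$ back into $\eps\langle \G,\dot\M(0)\rangle$ makes it collapse, thanks to the proportionality, to $-\alpha\,\eps\langle\M_*,\dot\M(0)\rangle$, which is nonnegative on the admissible tangent half-space precisely when $\alpha \ge 0$.

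\emph{Main obstacle.} The one genuinely nontrivial point is verifying that the multiplier $\alpha$ in the Lagrange relation is actually positive, so that $\M_*$ is a local minimizer rather than a maximizer or saddle, and in particular that $\alpha = 0$ is excluded by $F_\eps^\od(\M_*) > 0$. Taking the inner product of $\G = -\alpha\M_*$ with $\M_*$ and using $\|\M_*\|_F = 1$ reduces the question to evaluating $\alpha = -\langle\M_*,\G\rangle$ explicitly. I would carry out this evaluation by expanding the three components using the pseudoinverse identities $Gx = 0$, $Nu = 0$, $Pw = 0$ already exploited in Lemma~\ref{lem:nonzero}, together with an Euler-type identity for the quadratic pieces $\lambda^2,\nu^2$. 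The expected outcome is $\alpha = 2F_\eps^\od(\M_*) > 0$, which both closes the equivalence (i)$\Leftrightarrow$(ii) and confirms the announced rank-two extremality.
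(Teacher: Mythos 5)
Your proposal reorganizes the paper's argument into an explicit KKT/polar-cone framework instead of the contrapositive route through the constrained gradient flow \eqref{eq:ode-o} that the paper uses (via Theorems~\ref{stat:odd} and~\ref{thm:monotone}). The identification $\frac{d}{dt}F_\eps^\od = \eps\langle \G,\dot\M\rangle$ and the polarity step correctly translate condition~(i) into $\G = -\alpha\M_*$ with $\alpha\ge 0$, and with Lemma~\ref{lem:nonzero} forcing $\G\neq 0$ (hence $\alpha>0$), the implication (i)$\Rightarrow$(ii) follows. This half is sound, and arguably more transparent than the paper's flow-based argument. Also note, for the converse, you tacitly assume that the three proportionality constants in~(ii) coincide, which is how the paper intends~(ii) to be read but is worth stating.

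The genuine gap is precisely where you flag it, and your proposed closure does not work. The claimed identity $\alpha = 2F_\eps^\od(\M_*)$ cannot be obtained from an Euler-type homogeneity argument: $\lambda$ and $\nu$ are eigenvalues of $E+\eps\Delta$ and $R+\eps\Theta$, which are affine but not homogeneous in $(\Delta,\Theta)$, so there is no degree-two scaling relation to exploit, and the pseudoinverse identities $Gx=Nu=Pw=0$ only annihilate cross terms, they do not produce the factor $2F_\eps^\od$. The formula already fails in the stripped-down model where the functional is $\tfrac12\lambda^2$ and only $\Delta$ varies: there $\G=\lambda xx^\top$, the stationarity relation $\G=\rho\Delta_*$ together with $\|\Delta_*\|_F=1$ forces $|\rho|=|\lambda|$, so $\alpha=-\rho=\mp\lambda$, which equals $2F=\lambda^2$ only in the nongeneric case $|\lambda|=1$. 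Thus (ii)$\Rightarrow$(i) is not established by your computation. The paper sidesteps the sign issue by reasoning entirely along the norm-preserving flow \eqref{eq:ode-o}, whose velocity lies in the tangent space $\{\langle\M,\dot\M\rangle=0\}$ (a subspace, not the half-space), so that Theorems~\ref{stat:odd} and~\ref{thm:monotone} give nonstationarity $\Leftrightarrow$ strict descent without ever needing the multiplier's value. A KKT-style proof would need a different and correct justification that $\alpha\ge 0$; your proposal does not supply one.
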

\begin{proof}
Lemma \ref{lem:nonzero} ensures that $\lambda x  + p \neq 0$.

Assume that (i) does not hold. Then there exists a path $(\Delta(t),\Theta(t),\Gamma(t))$ through $(\Delta_*,\Theta_*,\Gamma_*)$
such that $\frac{d}{dt} F_\eps(\Delta(t), \Theta(t), \Gamma(t))\big|_{t=0} < 0$.
The steepest descent gradient property shows that also the solution path of (\ref{eq:ode-o}) passing through $(\Delta_*,\Theta_*,\Gamma_*)$
is such a path.

Hence $(\Delta_*,\Theta_*,\Gamma_*)$ is not a stationary point of (\ref{eq:ode-o}), and Theorem~\ref{stat:odd} then yields that
(ii) does not hold.

Conversely, if
\begin{equation*}
\left(\Delta_*,\Theta_*,\Gamma_*\right) \not\propto
\left( \Sym  \left( \left(\lambda x  + p \right)\,x^\top \right),\Sym  \left( \left(\nu  u  + q \right)\,u^\top \right)\Skew \left( r \,w^\top \right) \right)
\end{equation*}
then $(\Delta_*,\Theta_*,\Gamma_*)$ is not a stationary point of (\ref{eq:ode-o}), and Theorems~\ref{stat:odd} and \ref{thm:monotone} yield that
$\frac{d}{dt} F_\eps(\Delta(t), \Theta(t), \Gamma(t))\big|_{t=0} < 0$ along the solution path of (\ref{eq:ode-o}).
\end{proof}

\subsection{Sparsity preservation}\label{sec:sparsity}

If the matrices $E,R$ and $J$ have a given sparsity pattern, then we may include as a constraint that the perturbations do not alter the sparsity structure.
In terms of the Frobenius norm, it is immediate to obtain the constrained gradient system. Denoting by $\Pi_E$, $\Pi_R$, and $\Pi_J$, respectively,  projections
onto the manifold of sparse  matrices with the given sparsity pattern and structure of $E$, $R$ and $J$, then we get
\begin{eqnarray}
\dot \Delta & = & - \Pi_E \Sym \left( \left(\lambda x  + p \right)\,x^\top \right)  + \varrho \Delta, \nonumber
\\[2mm]
\dot \Theta & = & - \Pi_R \Sym \left( \left(\nu  u  + q \right)\,u^\top \right)  + \varrho \Theta,
\label{eq:ode-osp}
\\[2mm]
\dot \Gamma & = & - \Pi_J \Skew \left( r \,w^\top \right)  + \varrho \Gamma, \nonumber
\end{eqnarray}
where
\[
\varrho =     \left( \langle \Delta,  \Pi_E \Sym  \left( (\lambda x  + p)\,x^\top \right) \rangle +
                     \langle \Theta,  \Pi_R \Sym \left( (\nu  u  + q)\,u^\top \right) \rangle +
					           \langle \Gamma,  \Pi_J \Skew \left(  r\,w^\top \right) \rangle \right).
\]

After deriving formulas, in the next subsection we illustrate the properties of the optimization procedure with a numerical example.
\subsection{A numerical example} \label{sec:illex}

Let $n=5$ and consider the randomly generated matrices
\begin{eqnarray*}
E   &=& \left[ \begin{array}{rrrrr}
    0.15  &  0.02  & -0.04  &  0.02  & -0.04 \\
    0.02  &  0.22  &     0  & -0.01  & -0.03 \\
   -0.04  &     0  &  0.11  & -0.07  & -0.04 \\
    0.02  & -0.01  & -0.07  &  0.01  &  0.10 \\
   -0.04  & -0.03  & -0.04  &  0.10  &  0.39
\end{array} \right], \\
R   &=& \left[ \begin{array}{rrrrr}
    0.49 & -0.13 &  0.05 & -0.15 &  0.11 \\
   -0.13 &  0.23 & -0.05 & -0.10 & -0.19 \\
    0.05 & -0.05 &  0.48 & -0.06 &  0.02 \\
   -0.15 & -0.10 & -0.06 &  0.55 &  0.16 \\
    0.11 & -0.19 &  0.02 &  0.16 &  0.48
\end{array} \right],\\
J   &=& \left[ \begin{array}{rrrrr}
       0  &  -0.27  &  -0.03  &  -0.01  &   0.21 \\
    0.27  &      0  &  -0.15  &   0.03  &   0.11 \\
    0.03  &   0.15  &      0  &   0.07  &  -0.07 \\
    0.01  &  -0.03  &  -0.07  &      0  &   0.05 \\
   -0.21  &  -0.11  &   0.07  &  -0.05  &      0
\end{array} \right].
\end{eqnarray*}
Running the two level iteration with an initial value of the functional
$F_0^\od \left( \cdot,\cdot,\cdot \right) = 0.9181$, we find a perturbation at a distance (rounded to four digits) $\eps^* = 0.35681$ with  a common null space given by the vector
\begin{equation*}
c = \left[ \begin{array}{rrrrr}
    0.2195 \\
   -0.6664 \\
   -0.0639 \\
    0.3187 \\
   -0.6341 \end{array} \right]
\end{equation*}
and the computed perturbations are given by
\begin{eqnarray*}
\Delta E &=& \left[ \begin{array}{rrrrr}
   -0.0385 &  0.0912 &  0.0089 &  0.0251 &  0.1408 \\
    0.0912 & -0.2114 & -0.0218 & -0.0903 & -0.3482 \\
    0.0089 & -0.0218 & -0.0009 &  0.0019 & -0.0293 \\
    0.0251 & -0.0903 &  0.0019 &  0.1628 & -0.0123 \\
    0.1408 & -0.3482 & -0.0293 & -0.0123 & -0.4801
\end{array} \right],\\
\Delta R &=& \left[ \begin{array}{rrrrr}
   -0.0689 &  0.1166 &  0.0148 & -0.1044 &  0.1242 \\
    0.1166 & -0.1164 & -0.0307 &  0.1766 & -0.1433 \\
    0.0148 & -0.0307 & -0.0049 &  0.0208 & -0.0319 \\
   -0.1044  & 0.1766 &  0.0208 & -0.1592 &  0.1884 \\
    0.1242 & -0.1433 & -0.0319 &  0.1884 & -0.1679
\end{array} \right],\\
\Delta J &=& \left[ \begin{array}{rrrrr}
         0 &  0.0887 & -0.0118 & -0.0474 &  0.0852 \\
   -0.0887 &       0 &  0.0496 &  0.0003 &  0.0027 \\
    0.0118 & -0.0496 &       0 &  0.0257 & -0.0488 \\
    0.0474 & -0.0003 & -0.0257 &       0 & -0.0030 \\
   -0.0852 & -0.0027 &  0.0488 &  0.0030 &       0	
\end{array} \right ].
\end{eqnarray*}

\section{Rank two dynamics}\label{sec:rank2}
Theorem~\ref{stat:odd} motivates to search for a differential equation on the manifold of rank two symmetric/skew-symmetric matrices,
which still leads to a gradient system for $F_\eps^\od$, but in addition requires the derivatives of the matrices $\Delta,\Theta,\Gamma$ lying
in the respective tangent spaces.

Let $\cM_2^{n,n} := \{ X\in \R^{n\times n}: {\rm rank}(X)=2 \}$. Then we restrict the perturbations to the matrix manifolds
\begin{equation}
\Delta, \Theta \in \cM_2^{\Sym^{n,n}}, \qquad \Gamma \in \cM_2^{\Skew},
\end{equation}
where $\cM_2^{\Sym^{n,n}} = \cM_2^{n,n} \cap \Sym^{n,n}$ and $\cM_2^{\Skew^{n,n}} = \cM_2^{n,n} \cap \Skew^{n,n}$.

Following \cite{KL08}, every real symmetric rank two matrix $X$ of dimension $n\times n$ can be written in
the form
\begin{equation}\label{USU}
X = USU^\tp,
\end{equation}
where $U\in\R^{n\times 2}$ has orthonormal
columns, i.e., $ U^\tp  U = I_2$
and $S\in \Sym^{2\times 2}$. Here we will not assume that $S$ is diagonal.
Note that the representation (\ref{USU}) is not unique; indeed replacing $U$ by
$\widetilde U=U U_1$ with orthogonal $U_1\in\R^{2\times 2}$
and correspondingly $S$ by $\widetilde S=U_1^\tp S U_1$, yields the same matrix
$X=USU^\tp =\widetilde U \widetilde S \widetilde U^\tp $.

As a compensation for the non-uniqueness in the decomposition
(\ref{USU}), we will use a unique decomposition in the tangent space.
Let $\V_{n,2}$ denote the Stiefel manifold of real $n\times 2$ matrices
with orthonormal columns. The tangent space at $U\in\V_{n,2}$ is given by
\begin{eqnarray*}
T_U \V_{n,2} &=&
\{ \dot  U \in \R^{n\times 2}:\
\dot  U^\tp  U + U^\tp  \dot  U = 0\}
=
\{ \dot  U \in \R^{n\times 2}:\
U^\tp  \dot  U \, \hbox{is skew-symmetric}\}.
\end{eqnarray*}
Following  \cite{KL08}, every tangent matrix $\dot  X \in T_X\cM_2^{\Sym^{n,n}}$ is of the form
\[ 
\dot  X = \dot U S U^\tp  + U\dot  S U^\tp  + U S \dot U^\tp ,
\]
where  $\dot  S\in\Sym^{2\times 2}$,  $\dot  U \in T_U \V_{n,2}$,
and $\dot  S, \dot  U$ are uniquely determined by $\dot  X$ and $U,S$, if we impose the orthogonality condition
\[
U^\tp \dot  U = 0.
\]
We note the following lemma adapted from \cite{KL08}.
\begin{lemma}\label{lem:P-formula}
The orthogonal projection onto the tangent space $T_X\cM_2^{\Sym^{n,n}}$ at $X=USU^\tp \in \cM_2^{\Sym^{n,n}}$
is given by
\begin{equation}\label{P-formula}
P^\Sym_X(Z) = Z - (I-UU^\tp) Z (I-UU^\tp)
\end{equation}
for $Z\in\Sym^{n\times n}$.
\end{lemma}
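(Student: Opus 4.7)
The plan is to introduce the orthogonal projector $P := UU^\top$ onto $\Span(U)$ and to characterize the tangent space purely in terms of $P$: we claim that
\[
T_X \cM_2^{\Sym^{n,n}} = \{\, Y \in \Sym^{n,n} \st (I-P)\,Y\,(I-P) = 0 \,\}.
\]
The ``$\subseteq$'' inclusion is a one-line calculation: for any tangent vector $\dot X = \dot U S U^\top + U \dot S U^\top + U S \dot U^\top$, each summand is annihilated on the left or the right by $I-P$, using $(I-P)U=0$ and $U^\top (I-P)=0$. For the reverse inclusion, given a symmetric $Y$ with $(I-P)Y(I-P)=0$, decompose
\[
Y \;=\; PYP \,+\, PY(I-P) \,+\, (I-P)YP,
\]
and realise $PYP = U(U^\top Y U)U^\top$ by choosing $\dot S := U^\top Y U \in \Sym^{2 \times 2}$, and realise $(I-P)YP$ as $\dot U S U^\top$ by setting $\dot U := (I-P)\,Y\,U\,S^{-1}$; invertibility of $S$ (ensured by $\mathrm{rank}(X)=2$) is used here, and the identity $U^\top (I-P) = 0$ shows that $U^\top \dot U = 0$, so $\dot U \in T_U\V_{n,2}$. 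The symmetric piece $PY(I-P)$ is then automatically produced as $U S \dot U^\top$.

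Next I would identify the orthogonal complement of $T_X \cM_2^{\Sym^{n,n}}$ inside $\Sym^{n,n}$. For any $Y \in T_X$ and any symmetric $Z$, the cyclic property of the trace gives
\[
\bigl\langle (I-P)Z(I-P),\, Y \bigr\rangle \;=\; \tr\bigl( Y (I-P) Z (I-P) \bigr) \;=\; \tr\bigl( (I-P) Y (I-P) Z \bigr) \;=\; 0,
\]
so $(I-P)Z(I-P) \in T_X^\perp$. Moreover, the complementary piece $Z - (I-P)Z(I-P)$ lies in $T_X$, because
\[
(I-P)\bigl[Z - (I-P)Z(I-P)\bigr](I-P) \;=\; (I-P)Z(I-P) - (I-P)^2 Z (I-P)^2 \;=\; 0,
\]
using $(I-P)^2 = I-P$. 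Therefore $Z = \bigl(Z - (I-P)Z(I-P)\bigr) + (I-P)Z(I-P)$ is the orthogonal decomposition into a tangent and a normal part, which establishes \eqref{P-formula}.

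The only genuinely non-routine step is the surjectivity in the characterization of $T_X$, where one must explicitly produce $\dot S$ and $\dot U$ (respecting the gauge constraint $U^\top \dot U = 0$) that reproduce a prescribed symmetric $Y$ satisfying $(I-P)Y(I-P)=0$; everything else is trace manipulations with $P$.
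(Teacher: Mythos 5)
Your proof is correct. The paper does not actually prove this lemma --- it merely cites Koch and Lubich \cite{KL08} and adapts their rank-$r$ projection formula to the symmetric rank-two setting --- so you are supplying what the paper leaves implicit, and you do it along exactly the lines of the cited reference: characterize $T_X\cM_2^{\Sym^{n,n}}$ as $\{Y\in\Sym^{n,n}: (I-P)Y(I-P)=0\}$ with $P=UU^\top$, show $(I-P)Z(I-P)\perp T_X$ by cyclicity of the trace, and observe that $Z-(I-P)Z(I-P)$ lies in $T_X$ by idempotence of $I-P$. The explicit reconstruction of $(\dot S,\dot U)$ from $Y$ using $\dot S = U^\top Y U$ and $\dot U = (I-P)YUS^{-1}$, with the gauge check $U^\top\dot U=0$, is exactly the surjectivity argument from \cite[Prop.~2.1]{KL08}, and it correctly uses $S=S^\top$ and the invertibility of $S$ guaranteed by $\mathrm{rank}(X)=2$. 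No gaps.
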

Analogous results hold for $Y \in \Skew^{n,n}$, with $S \in \Skew^{2,2}$.

\subsection{A differential equation for rank two matrices} \label{rank2ode}
To derive the differential equation in the rank two case, we replace
in (\ref{eq:ode-o})  the right-hand sides by the orthogonal projections to
$T_\Delta\cM_2^{\Sym^{n,n}}$, $T_\Theta\cM_2^{\Sym^{n,n}}$, and $T_\Gamma\cM_2^{\Skew^{n,n}}$, respectively, so that solutions starting with rank-two will retain rank two for all $t$. This gives the differential equations
\begin{eqnarray}
\dot \Delta & = & - P^\Sym_\Delta \Bigl( \Sym  \left( \left(\lambda x  + p \right)\,x^\top \right)  + \varrho \Delta \Bigr), \nonumber
\\[2mm]
\dot \Theta & = & - P^\Sym_\Theta \Bigl( \Sym  \left( \left(\nu  u  + q \right)\,u^\top \right)  + \varrho \Theta \Bigr),
\label{eq:ode-or2}
\\[2mm]
\dot \Gamma & = & - P^\Skew_\Gamma \Bigl( \Skew \left( r \,w^\top \right)  + \varrho \Gamma \Bigr), \nonumber
\end{eqnarray}
where again $p,q$ and $r$ are defined by \eqref{eq:pqr} and
\begin{eqnarray*}
\varrho & = &           \Big\langle \Delta,  P^\Sym_\Delta  \Bigl( \Sym  \left( \left(\lambda x  + p \right)\,x^\top \right) \Bigr) \Big\rangle +
\\
& + &
												\Big\langle \Theta,  P^\Sym_\Theta  \Bigl( \Sym  \left( \left(\nu u  + q \right)\,u^\top \right) \Bigr) \Big\rangle +
					              \Big\langle \Gamma,  P^\Skew_\Gamma \Bigl( \Skew\left( r\,w^\top \right) \Bigr) \Big\rangle  .
\end{eqnarray*}

Since for $X\in \cM_2^{\Sym^{n,n}}$ and $Z \in \Sym^{n,n}$, we have $P_X(X)=X$ and $\langle X,Z \rangle = \langle X,P_X(Z) \rangle$,
(and analogous  properties hold for $X\in \cM_2^{\Skew^{n,n}}$ and $Y \in \cM_2^{\Skew^{n,n}}$), the system of differential equations can be rewritten as
\begin{eqnarray}
\dot \Delta & = & - P^\Sym_\Delta \Bigl( \Sym  \left( \left(\lambda x  + p \right)\,x^\top \right) \Bigr) + \varrho \Delta,  \nonumber
\\[2mm]
\dot \Theta & = & - P^\Sym_\Theta \Bigl( \Sym  \left( \left(\nu  u  + q \right)\,u^\top \right) \Bigr)  + \varrho \Theta,
\label{eq:ode-o2}
\\[2mm]
\dot \Gamma & = & - P^\Skew_\Gamma  \Bigl( \Skew \left( r \,w^\top \right) \Bigr) + \varrho \Gamma,  \nonumber
\end{eqnarray}
with
\begin{eqnarray*}
\varrho & = & \left(     \left\langle \Delta,  P^\Sym_\Delta \left( \Sym  \left( (\lambda x  + p)\,x^\top \right) \right) \right\rangle +
                         \left\langle \Theta,  P^\Sym_\Theta \left( \Sym  \left( (\nu  u  + q)\,u^\top \right) \right) \right\rangle \ \right.
\\										
	      & + & \left. \left\langle \Gamma,  P^\Skew_\Gamma \left( \Skew \left(  r\,w^\top \right) \right) \right\rangle \right).
\end{eqnarray*}
This system differs from \eqref{eq:ode-o} in that  the free gradient terms are replaced by their orthogonal projections on the rank two manifold of the corresponding structure.

To obtain the differential equation in a form that uses the factors in $X=USU^\tp $ rather than the full $n\times n$ matrix $X$, we use the following result,
whose proof is similar to that given in \cite[Prop. 2.1]{KL08}.
\begin{lemma}\label{lem:us}
For $X=USU^\tp \in \cM_2^{\Sym^{n,n}}$ with nonsingular $S\in\Sym^{2\times 2}$ and with
$U\in\R^{n\times 2}$ having orthonormal columns,
the equation $\dot X=P^\Sym_X(Z)$ with $Z$ symmetric is equivalent to
$
\dot X = \dot U S U^\tp  + U \dot S U^\tp  + U S\dot U^\tp ,
$
where
\begin{eqnarray}
\dot S &=& U^\tp  Z U,
\nonumber\\
\dot U &=& (I-UU^\tp) Z U S^{-1}.
\label{odes}
\nonumber
\end{eqnarray}
An analogous statement holds for $Y \in \cM_2^{\Skew^{n,n}}$ and $Z$ skew-symmetric.
\end{lemma}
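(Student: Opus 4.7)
My plan is to follow the template of \cite[Prop.~2.1]{KL08}: exploit the gauge condition $U^\top \dot U = 0$ together with the orthogonal decomposition $I = UU^\top + (I - UU^\top)$ to extract $\dot S$ and $\dot U$ from the equation $\dot X = P^\Sym_X(Z)$ by testing it against the two complementary projectors.

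First I would differentiate the factorisation $X = USU^\top$ by the product rule to obtain $\dot X = \dot U S U^\top + U\dot S U^\top + US\dot U^\top$; the gauge $U^\top \dot U = 0$ makes the pair $(\dot S, \dot U)$ uniquely recoverable from $\dot X$ (this is the reason for imposing the gauge in the first place). Next I would expand \eqref{P-formula} into the more useful form $P^\Sym_X(Z) = Z\,UU^\top + UU^\top Z - UU^\top Z UU^\top$, which displays how the projector acts on the four blocks associated with the splitting $I = UU^\top + (I-UU^\top)$.

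For the $\dot S$ identity, I would contract $\dot X = P^\Sym_X(Z)$ with $U^\top (\cdot) U$: using $U^\top U = I_2$ and $U^\top \dot U = 0$, the outer two summands of $\dot X$ vanish and the left-hand side collapses to $\dot S$, while on the right each of the three summands of the expanded projection reduces to $U^\top Z U$, giving $U^\top Z U + U^\top Z U - U^\top Z U = U^\top Z U$. Hence $\dot S = U^\top Z U$. For the $\dot U$ identity I would contract instead with $(I - UU^\top)(\cdot)\,U$: on the left, the term $U\dot S U^\top$ is killed by $I - UU^\top$ on the left, the term $US\dot U^\top U$ vanishes by $\dot U^\top U = 0$, and $(I - UU^\top)\dot U = \dot U$ by the gauge condition, leaving $\dot U S$; on the right, $UU^\top Z$ and $UU^\top Z UU^\top$ are annihilated by $I - UU^\top$, while $Z UU^\top$ yields $(I - UU^\top) Z UU^\top U = (I - UU^\top) Z U$. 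Since $S$ is invertible we obtain $\dot U = (I - UU^\top) Z U S^{-1}$; a direct check $U^\top \dot U = (U^\top - U^\top) Z U S^{-1} = 0$ confirms that the gauge condition is automatically preserved.

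The skew-symmetric statement follows identically, since the tangent-space projection onto $T_Y\cM_2^{\Skew^{n,n}}$ has the same algebraic form as \eqref{P-formula}; skew-symmetry of $Z$ forces $\dot S = U^\top Z U$ to be skew by $(U^\top Z U)^\top = U^\top Z^\top U = -U^\top Z U$, and the derivation of $\dot U$ is verbatim the same. I do not foresee a substantive obstacle; the whole proof is bookkeeping of the identities $U^\top U = I_2$ and $U^\top \dot U = 0$ through the two contractions. The one point worth flagging is that the invertibility of $S$ is essential and explains why the chart $X = USU^\top$ parametrises only the open part of the rank-two manifold on which the differential equation is well posed.
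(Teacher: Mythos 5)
Your argument is correct and follows exactly the KL08 template the paper itself invokes (the paper does not spell out the proof, saying only that it is similar to~\cite[Prop.~2.1]{KL08}): expand $P^\Sym_X(Z)=ZUU^\top+UU^\top Z-UU^\top ZUU^\top$, observe that the right-hand side lies in $T_X\cM_2^{\Sym^{n,n}}$ and hence has a unique representation $\dot U SU^\top+U\dot SU^\top+US\dot U^\top$ under the gauge $U^\top\dot U=0$, then recover $\dot S$ and $\dot U$ by the contractions $U^\top(\cdot)U$ and $(I-UU^\top)(\cdot)U$, using $S^{-1}$ for the latter. The gauge-preservation check and the remark on the skew case ($U^\top ZU$ skew when $Z$ is) are also standard and correct; the only cosmetic addition worth making is to note explicitly that the orthogonal projection already lies in the tangent space, so the two contractions determine $(\dot S,\dot U)$ without a separate back-substitution check.
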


With the ansatz $Z=  {}-\lambda x x^\top - \frac12 p x^\top + \frac12 x p^\top$, and introducing for $S_1,U_1$, the quantities $g_1 = U_1^\top x \in \R^2$ and $h_1 = U_1^\top p \in \R^2$, this yields that the differential equation (\ref{eq:ode-o2})
for $\Delta=U_1 S_1 U_1^T$ is equivalent to the following system of differential equations
\begin{eqnarray}
\dot S_1 &=&  {}-\lambda g_1 g_1^\top - \frac12 \left( g_1 h_1^\top + h_1 g_1^\top \right)   + \varrho S_1,
\nonumber
\\
\label{ode-su1}
\\
\dot U_1 &=& \left({}-\lambda x g_1^\top - \frac12 \left( x h_1^\top + p g_1^\top \right)
          +  U_1\,\left( \lambda g_1 g_1^\top + \frac12 \left( g_1 h_1^\top + h_1 g_1^\top \right) \right) \right) S_1^{-1}.
\nonumber
\end{eqnarray}
Similarly, for $\Theta=U_2 S_2 U_2^T$, setting  $g_2 = U_2^\top u \in \R^2, h_2 = U_2^\top q \in \R^2$, we obtain the system of differential equations
\begin{eqnarray}
\dot S_2 &=&  {}-\nu g_2 g_2^\top - \frac12 \left( g_2 h_2^\top + h_2 g_2^\top \right)   + \varrho S_2,
\nonumber
\\
\label{ode-su2}
\\
\dot U_2 &=& \left({}-\nu u g_2^\top - \frac12 \left( u h_2^\top + q g_2^\top \right)
          +  U_2\,\left( \lambda g_2 g_2^\top + \frac12 \left( g_2 h_2^\top + h_2 g_2^\top \right) \right) \right) S_2^{-1}.
\nonumber
\end{eqnarray}
Finally, for $\Gamma=U_3 S_3 U_3^T$ (with $S_3 \in \Skew^{2,2}$), setting $g_3 = U_3^\top w \in \R^2, h_3 = U_3^\top r \in \R^2$, we obtain the  system of differential equations
\begin{eqnarray}
\dot S_3 &=&  \frac12 \left( {}-g_3 h_3^\top + h_3 g_3^\top \right)   + \varrho S_3,
\nonumber
\\
&&
\label{ode-su3}
\\
\dot U_3 &=& \left(\frac12 \left( {}-w h_3^\top + r g_3^\top \right)
          +  U_3\,\left( \frac12 \left( g_3 h_3^\top - h_3 g_3^\top \right) \right) \right) S_3^{-1}.
\nonumber
\end{eqnarray}
Having established differential equations for rank two factors, in the next section we discuss the monotonicity of the functional.

\subsection{Monotonicity of the functional}\label{sec:monotone}
We have the following monotonicity result, which establishes that \eqref{eq:ode-o2} is a gradient system for
$F_\eps^\od(\Delta(t),\Theta(t),\Gamma(t))$.

\begin{theorem} \label{thm:monotone-2}
Let $\Delta(t),\Theta(t) \in \cM_2^{\Sym^{n,n}},\Gamma(t) \in \cM_2^{\Skew^{n,n}}$ satisfy the differential equation \eqref{eq:ode-o2}, and suppose that
\begin{equation*}
F_\eps^\od(\Delta(t),\Theta(t),\Gamma(t)) > 0.
\end{equation*}
If $\lambda(t)$ is a simple eigenvalue of $E+\eps \Delta(t)$, $\mu(t)$ is a simple eigenvalue of $R + \eps \Theta(t)$, and $0$
a simple eigenvalue of $J + \eps \Gamma$, then
\begin{equation}
\frac{d}{dt} F_\eps^\od(\Delta(t),\Theta(t),\Gamma(t))  < 0.
\label{eq:mon-2}
\end{equation}
\end{theorem}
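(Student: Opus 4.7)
The plan is to mirror the proof of Theorem~\ref{thm:monotone} while accounting for the additional orthogonal projections onto the rank-two tangent spaces. Writing $\G=(\G_E,\G_R,\G_J)$ as in \eqref{eq:G}, $\M=(\Delta,\Theta,\Gamma)$, and abbreviating the joint tangential projection by $P_\M\G := \bigl(P^\Sym_\Delta \G_E,\, P^\Sym_\Theta \G_R,\, P^\Skew_\Gamma \G_J\bigr)$ (using Lemma~\ref{lem:P-formula} and its skew-symmetric analogue), the system \eqref{eq:ode-o2} takes the compact form $\dot\M = -P_\M\G + \varrho\,\M$ in the block Frobenius inner product on $\R^{n,3n}$, with $\varrho = \langle \M, P_\M\G\rangle$.

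First I would recompute $\tfrac{d}{dt} F_\eps^\od$ via the eigenvalue/eigenvector derivative formulas of Lemmas~\ref{lem:eigderiv} and~\ref{lem:eigvecderiv}, exactly as carried out in Section~\ref{sec:compgrad}. That derivation only uses the symmetry/skew-symmetry of the perturbations, not their rank, so the ``free-gradient'' identity $\tfrac{d}{dt} F_\eps^\od = \eps\,\langle \G, \dot\M\rangle$ transfers verbatim to the rank-two setting.

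Next I would substitute $\dot\M = -P_\M\G + \varrho\M$ and exploit two properties of the projections: each of $P^\Sym_\Delta$ and $P^\Skew_\Gamma$ is a self-adjoint idempotent on the ambient symmetric, respectively skew-symmetric, space; and every point of $\cM_2^{\Sym^{n,n}}$ (respectively $\cM_2^{\Skew^{n,n}}$) lies in its own tangent space, so $P_\M\M=\M$. Self-adjointness gives $\langle \G, P_\M\G\rangle = \|P_\M\G\|_F^2$, while $P_\M\M=\M$ together with self-adjointness yields $\varrho = \langle \M, P_\M\G\rangle = \langle P_\M\M, \G\rangle = \langle \M, \G\rangle$. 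Combining these produces the clean identity
\[
\tfrac{1}{\eps}\tfrac{d}{dt} F_\eps^\od \;=\; -\|P_\M\G\|_F^2 + \varrho\,\langle \G,\M\rangle \;=\; \varrho^2 - \|P_\M\G\|_F^2.
\]

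Finally I would apply Cauchy--Schwarz to $\varrho = \langle \M, P_\M\G\rangle$ together with the constraint $\|\M\|_F = 1$, obtaining $\varrho^2 \le \|P_\M\G\|_F^2$, and hence $\tfrac{d}{dt}F_\eps^\od \le 0$. Equality would force $\M$ to be a scalar multiple of $P_\M\G$, which by direct substitution into \eqref{eq:ode-o2} gives $\dot\M=0$, i.e.\ the stationarity condition (the rank-two analogue of Theorem~\ref{stat:odd}). The main delicate point is therefore the strict inequality in \eqref{eq:mon-2}: under the standing hypotheses $F_\eps^\od>0$ and simplicity of $\lambda$, $\nu$, and the zero eigenvalue of $J+\eps\Gamma$, one invokes Lemma~\ref{lem:nonzero} to guarantee that $\lambda x + p$, $\nu u + q$ and $r$ do not vanish, so that $P_\M\G\neq 0$ and the coincidental Cauchy--Schwarz alignment $\M\propto P_\M\G$ cannot occur along a non-stationary trajectory. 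This is the only step where more than bookkeeping is required, and it is exactly the rank-two counterpart of the argument already used for Theorem~\ref{thm:monotone} together with Theorem~\ref{stat:odd}.
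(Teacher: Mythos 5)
Your proposal is correct and, at its core, follows the same route as the paper: substitute \eqref{eq:ode-o2} into the free-gradient expansion of $\tfrac{d}{dt}F_\eps^\od$, use that the tangent-space projections are self-adjoint idempotents that fix the base point (so $\langle\G,P_\M\G\rangle=\|P_\M\G\|_F^2$ and $\varrho=\langle\M,P_\M\G\rangle=\langle\M,\G\rangle$), arrive at the identity $\tfrac{1}{\eps}\tfrac{d}{dt}F_\eps^\od=\varrho^2-\|P_\M\G\|_F^2$, and invoke Cauchy--Schwarz. The paper carries this out block by block, writing out all three $\Sym/\Skew$ projections explicitly; your block notation $P_\M\G$ compresses the same calculation but adds nothing mathematically new. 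One place where you go beyond the paper is the closing discussion of strictness. The paper's proof ends with ``Applying the Cauchy--Schwarz inequality proves the assertion,'' which literally only yields $\le 0$, not the strict inequality \eqref{eq:mon-2}. Your attempt to close this gap is welcome but not watertight: Lemma~\ref{lem:nonzero} guarantees that $\lambda x+p$, $\nu u+q$, $r$ are nonzero, hence $\G\neq 0$, but it does not by itself guarantee $P_\M\G\neq 0$ after projection onto the rank-two tangent spaces, and even if $P_\M\G\neq 0$ it does not preclude the Cauchy--Schwarz alignment $\M\propto P_\M\G$ (i.e.\ a stationary point) at a given instant under the stated hypotheses. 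So the honest conclusion from both your argument and the paper's is $\tfrac{d}{dt}F_\eps^\od\le 0$, with equality exactly at stationary points; your instinct that this is the only delicate point is right, but neither your write-up nor the paper's proof fully establishes the strict form without an extra non-stationarity hypothesis.
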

\begin{proof}
We note  that
\begin{eqnarray*}
 \frac{1}{2 \eps} \frac d{dt} F_\eps^\od(\Delta,\Theta,\Gamma) &=&
\l\,\langle x x^\top, \dot \Delta \rangle +
\nu\, \langle u u^\top, \dot \Theta \rangle +
\left(
\Big\langle 
p x^\top, \dot \Delta \Big\rangle +
\Big\langle
q u^\top, \dot \Theta \Big\rangle +
\Big\langle 
r w^\top, \dot \Gamma \Big\rangle
\right)
\\[2mm]
& = &
{}-\Big\langle \left(\l x  + p \right)\,x^\top, - P^\Sym_\Delta \Bigl( \Sym  \left( \left(\lambda x  + p \right)\,x^\top \right) \Bigr) \Big\rangle +
\varrho \langle \left(\lambda x  + p \right)\,x^\top, \Delta \rangle
\\
&&
{}-\Big\langle \left(\nu u  + q \right)\,u^\top, - P^\Sym_\Theta \Bigl( \Sym  \left( \left(\nu u  + q \right)\,u^\top \right) \Bigr) \Big\rangle +
\varrho \langle \left(\nu u  + q \right)\,u^\top, \Theta \rangle
\\
&&
{}-\Big\langle r\,w^\top, - P^\Skew_\Gamma \Bigl( \Skew\left( r\,w^\top \right) \Bigr) \Big\rangle +
\varrho \langle r\,w^\top, \Theta \rangle
\\[2mm]
& = &
\varrho^2-\Big\| P^\Sym_\Delta \Bigl( \Sym  \left( \left(\lambda x  + p \right)\,x^\top \right) \Bigr) \Big\|^2 -
   \Big\| P^\Sym_\Theta \Bigl( \Sym  \left( \left(\nu u  + q \right)\,u^\top \right) \Bigr) \Big\|^2 \\
& - &   \Big\| P^\Skew_\Gamma \Bigl( \Skew\left( r\,w^\top \right) \Bigr) \Big\|^2,
\end{eqnarray*}
with
\begin{eqnarray*}
\varrho & = &           \Big\langle \Delta,  P^\Sym_\Delta  \Bigl( \Sym  \left( \left(\lambda x  + p \right)\,x^\top \right) \Bigr) \Big\rangle +
                     \Big\langle \Theta,  P^\Sym_\Theta  \Bigl( \Sym  \left( \left(\nu u  + q \right)\,u^\top \right) \Bigr) \Big\rangle
\\
& + &					           \Big\langle \Gamma,  P^\Skew_\Gamma \Bigl( \Skew\left( r\,w^\top \right) \Bigr) \Big\rangle  .
\end{eqnarray*}
Applying the Cauchy-Schwarz inequality proves the assertion.
\end{proof}

\subsection{Computational approach} \label{sec:comp}

We will use the explicit Euler method to carry out the numerical integration of the gradient systems associated with the rank two perturbations. For this we use  Algorithm~\ref{alg1},  an adaptation of the method proposed  in \cite{CL20}, and solve instead the system given by
%
%
%
$\dot{K}(t) = F \left( t, K(t)\,U_0^\top \right)\,U_0$ on the interval $[t_0,t_1]$ and to approximate the solution at time $t_1$ of the ODE $\dot{S}(t) = U_1^\top\, F \left( t, U_1 S(t) U_1^\top \right)$.
If $n$ is large, then the memory requirement and the computing time are significantly reduced with respect to the integration
of the full ODEs.
\IncMargin{1em}
\begin{algorithm}\label{alg1}
\DontPrintSemicolon
\KwData{Matrix $X_0 = U_0 S_0 U_0^\top$, $F(t,X)$, $t_0,t_1$, $h$}
\KwResult{Matrix $X_1 = U_1 S_1 U_1^\top$}
\Begin{
\nl Solve the $n \times 2$ ODE $\dot{K}(t) = F \left( t, K(t)\,U_0^\top \right)\,U_0, \ K(t_0) = U_0 S_0$.\;
\vskip 3mm
\nl Compute a $Q R$-decomposition  $K(t_1) = U_1\,R$.\;
\vskip 3mm
\nl Integrate the $2 \times 2$ ODE $\dot{S}(t) = U_1^\top\, F \left( t, U_1 S(t) U_1^\top \right)$,\;
\qquad with initial value $S(t_0) = U_1^\top X_0 U_1 = \left( U_1^\top U_0 \right) X_0 \left( U_1^\top U_0 \right)^\top$.\;
\vskip 3mm
\nl Set $S_1 = S(t_1)/\| S(t_1) \|$,(normalization).\;
\vskip 2mm
\nl Return $S_1, U_1$\;
}
\caption{Low rank symmetry/skew-symmetry preserving integrator}
\label{alg_lr}
\end{algorithm}
Using the example of Section \ref{sec:illex}, integrating Equations \eqref{ode-su1}, \eqref{ode-su2}, and \eqref{ode-su3},
 we obtain the same distance $\eps^*$ and a common null vector $c$ of the same accuracy as when integrating \eqref{eq:ode-o}, i.e.
\small
\begin{equation*}
\eps^* S_1 = \left[ \begin{array}{rr}
   -0.2722  &  0 \\
    0       &  0.0719
\end{array} \right], \
\eps^* S_2 = \left[ \begin{array}{rr}
   -0.2053  &  0 \\
    0       &  0.0205
\end{array} \right], \
\eps^* S_3 = \left[ \begin{array}{rr}
    0       &  0.0529 \\
   -0.0529  &  0
\end{array} \right]
\end{equation*}
and
\begin{equation*}
    U_1 = \left[ \begin{array}{rr}
   -0.2291  &  0.0737 \\
    0.5521  & -0.3270 \\
    0.0449  &  0.0453 \\
    0.0682  &  0.9264 \\
    0.7975  &  0.1658
\end{array} \right], \
U_2 = \left[ \begin{array}{rr}
   -0.3685  & -0.3829 \\
    0.4904  & -0.5782 \\
    0.0689  &  0.1841 \\
   -0.5685  & -0.5870 \\
    0.5439  & -0.3750
\end{array} \right], \
U_3 = \left[ \begin{array}{rr}
   -0.8360  &  0.2633 \\
   -0.1780  & -0.6387 \\
    0.4867  & -0.0431 \\
    0.1176  &  0.3495 \\
   -0.1367  & -0.6315
\end{array} \right].
\end{equation*}
\normalsize
%

\section{Gradient flow for even state dimension}\label{sec:even}
The derivation of the gradients in the case that the space dimension is even is more complicated, since in this case the skew-symmetric matrix $J$ is not guaranteed
to have a zero eigenvalue.
\subsection{Computation of the gradient of the functional \eqref{Feven}}\label{sec:gradneven}

Similarly to the odd case we have
\begin{eqnarray*}
\frac12 \frac d{dt} \mu^2 & = & {}-\eps\,\mu\, \langle \Im \left( w w^H \right), \dot \Gamma \rangle = \eps\,\mu\, \langle \left( \Re (w) \,\Im (w)^\top - \Im (w)\,\Re(w)^\top \right), \dot \Gamma \rangle.
\end{eqnarray*}
Considering orthogonal projections with respect to the Frobenius inner product onto the respective matrix manifolds
$\Sym^{n,n}$, $\Skew^{n,n}$, we identify the constrained gradient directions of the terms associated to eigenvalues as
\begin{equation}
\dot \Delta \propto \l\,   x x^\top, \qquad
\dot \Theta \propto \nu\,  u u^\top, \qquad
\dot \Gamma \propto \mu\, \left( \Re (w)\, \Im (w)^\top - \Im (w)\, \Re (w)^\top  \right).
\end{equation}
Different to the odd case we have to consider
\begin{eqnarray*}
\frac12 \frac d{dt} \left(| x^\top \Re(w) |^2 \right) & = &
\frac12 \frac d{dt} \left( x^\top \Re(w) \Re(w)^\top x \right)
\\
& = &
x^\top \Re(w) \Re(w)^\top \dot x  + \Re(w)^\top x x^\top  \Re(\dot w),
\end{eqnarray*}
and thus
\begin{eqnarray}
\frac12 \frac d{dt} \left( {}-| x^\top \Re(w) |^2 \right)
& = & \eps \left(
(x^\top \Re(w)) \Re(w)^\top G \dot \Delta x  +
(\Re(w)^\top x) x^\top \left( \frac{P \dot \Gamma w + \conj{P} \dot \Gamma \conj{w}}{2}  \right)
\right)
\nonumber
\\
& = & \eps \left(
\Big\langle 
\eta\, G^\top \Re(w) x^\top, \dot \Delta \Big\rangle +
\Big\langle 
\eta\, \Re(P^H x w^H), \dot \Gamma \Big\rangle
\right),
\label{eq:xtRew}
\end{eqnarray}
where $\eta = x^\top \Re(w)$, and $P$ is the pseudoinverse of $J+\eps \Gamma - \iu \mu I$.
Analogously
\begin{eqnarray*}
\frac12 \frac d{dt} \left( {}-| x^\top \Im(w) |^2 \right)
& = & \eps \left(
\Big\langle 
\zeta\, G^\top \Im(w) x^\top, \dot \Delta \Big\rangle +
\Big\langle 
\zeta\, \Im(P^H x w^H), \dot \Gamma \Big\rangle
\right),
\label{eq:xtImw}
\end{eqnarray*}
where $\zeta = x^\top \Im(w)$. Introduce $w_\r := \Re(w)$ and $ w_\i := \Im(w)$.
In order to compute the steepest descent direction, we minimize the gradient of $F_\eps$ and collect the
summands involving $\dot \Delta$, $\dot \Theta$ and those involving $\dot \Gamma$. This yields
\begin{equation*}
\frac d{dt} F_\eps(\Delta,\Theta) =
\eps\,\langle \left(\lambda x + p \right)\,x^\top, \dot \Delta \rangle
+
\eps\,\langle \left( \nu u + q \right)\,u^\top, \dot \Theta \rangle
+
\eps\,\langle W + \eta\,\Re(H) + \zeta\,\Im(H) , \dot \Gamma \rangle,
\end{equation*}
with
\begin{eqnarray*}
p & = & 
\theta G^\top u \ + 
G^\top \left( \eta w_\r + \zeta w_\i \right),
\\
q & = & 
\theta N^\top x,
\\
W & = & w_\r w_\i^\top - w_\i w_\r^\top,
\\
H & = & P^H x w^H.
\end{eqnarray*}
Taking into consideration projection with respect to the Frobenius inner product of the vector field onto the manifolds of symmetric and skew-symmetric matrices, this leads to the system of differential equations,
\begin{eqnarray}
\dot \Delta & = & - \Sym  \left( \left(\lambda x  + p \right)\,x^\top \right)  + \varrho \Delta, \nonumber
\\[2mm]
\dot \Theta & = & - \Sym  \left( \left(\nu  u  + q \right)\,u^\top \right)  + \varrho \Theta,
\label{eq:ode-e}
\\[2mm]
\dot \Gamma & = & - \Skew \left( W + \eta\,\Re(H) + \zeta\,\Im(H) \right)  + \varrho \Gamma, \nonumber
\end{eqnarray}
%
where
\begin{eqnarray*}
\varrho &=& \left( \langle \Delta,  \Sym  \left( (\lambda x  + p)\,x^\top \right) \rangle +
                     \langle \Theta,  \Sym  \left( (\nu  u  + q)\,u^\top \right) \rangle \right .\\ &+&
					         \left .  \langle \Gamma,  \Skew \left(  W + \eta\,\Re(H) + \zeta\,\Im(H) \right) \rangle \right)
\end{eqnarray*}
is again used to ensure the norm conservation. In this way we have again obtained a structured flow with matrices in
$\Sym^{n,n}$ and $\Skew^{n,n}$, respectively.

\begin{remark}
Similarly to the odd case it is possible to derive a rank two gradient system and obtain a more effective
numerical integration.
\end{remark}


\section{A unifying functional}\label{sec:unified}
One may also try to construct a unifying function that treats the odd and even dimension case together.
For this we denote by $x$ the eigenvector associated to $\lambda$, the smallest eigenvalue of $E+\eps \Delta$, with the goal to make this the common null vector in the end. Introduce the alternative functional
\begin{eqnarray}
\widetilde F_\eps(\Delta,\Theta,\Gamma)  & = &
\frac12 \Big(\lambda^2 + \nu^2 + \| \left(R + \eps \Theta \right) x \|_2^2 + \| \left(J + \eps \Gamma \right) x \|_2^2 \Big)
\nonumber
\\
& = & \frac12 \Big(\lambda^2 + \nu^2 +
x^\top \left(R + \eps \Theta \right)^2 x - x^\top \left(J + \eps \Gamma \right)^2 x  \Big),
\label{Fodd2}
\end{eqnarray}
with $\| \left( \Delta, \Gamma, \Theta \right) \|_F = 1$.

We observe that
\begin{eqnarray*}
\frac{1}{2 \eps} \frac d{dt} \left( x^\top \left(R + \eps \Theta \right)^2 x \right) & = &
-x^\top \left(R + \eps \Theta \right)^2 G \dot{\Delta} x + x^\top \left(R + \eps \Theta \right) \dot{\Theta} x
\\
& = &
{}-\langle G \left(R + \eps \Theta \right)^2 x x^\top, \dot{\Delta} \rangle + \langle \left(R + \eps \Theta \right) x x^\top, \dot{\Theta} \rangle,
\end{eqnarray*}
and similarly,
\begin{eqnarray*}
\frac{1}{2 \eps} \frac d{dt} \left( x^\top \left(J + \eps \Gamma \right)^2 x \right) & = &
{}-x^\top \left(J + \eps \Gamma \right)^2 G \dot{\Delta} x + x^\top \left(J + \eps \Gamma \right) \dot{\Gamma} x
\\
& = &
{}-\langle G \left(J + \eps \Gamma \right)^2 x x^\top, \dot{\Delta} \rangle + \langle \left(J + \eps \Gamma \right) x x^\top, \dot{\Gamma} \rangle.
\end{eqnarray*}
This leads to the system of ODEs
\begin{eqnarray}
\dot \Delta & = & -  \Sym\left( \left(\lambda x  + s \right)\,x^\top \right)  + \varrho \Delta, \nonumber
\\[2mm]
\dot \Theta & = & -  \Sym\left( \nu  u \,u^\top  +  t \, x^\top \right) + \varrho \Theta,
\label{eq:ode-oo}
\\[2mm]
\dot \Gamma & = &    \Skew \left( z \,x^\top \right)  + \varrho \Gamma, \nonumber
\end{eqnarray}
where
\[
\varrho = \left( \langle \Delta,  \Sym  \left( \left(\lambda x  + s \right)\,x^\top \right) \rangle +
                     \langle \Theta,  \left( \nu  u \,u^\top  + \Sym\left( t \, x^\top \right)  \right) \rangle -
					           \langle \Gamma,  \Skew \left(  z \,x^\top \right) \rangle \right)
\]
and
\begin{eqnarray}
\nonumber
s & = & G \Big( \left(J + \eps \Gamma \right)^2 - \left(R + \eps \Theta \right)^2 \Big) x,
\\
\label{eq:sto}
t & = & \left(R + \eps \Theta \right) x,
\\
z & = & \left(J + \eps \Gamma \right) x.
\nonumber
\end{eqnarray}
Although this functional appears simpler to manage and does not require the computation of two pseudo-inverses,
our experiments seem to indicate that with the previously considered functionals a higher accuracy can be reached.

\section{The outer iteration for  $\eps$}\label{sec:eps}
In this section we discuss the outer iteration  to compute the optimizing $\eps$. The simplest way to do this is by means of a bisection technique.
\IncMargin{1em}
\begin{algorithm}
\DontPrintSemicolon
\KwData{Matrices $E,R,J$, $k_{\max}$ (max number of iterations), $\delta$, and tolerance tol\;
$\eps_0$, $\eps_{\rm lb}$ and $\eps_{\rm ub}$ (starting values for the lower and upper bounds for $\epstar$)}
\KwResult{$\eps_\delta$ (upper bound for the distance), $\Delta(\epstar), \Theta(\epstar), \Gamma(\epstar)$}
\Begin{
\nl Compute $\Delta(\eps_0), \Theta(\eps_0), \Gamma(\eps_0)$\;
\nl Compute $g(\eps_0)$\;
\nl Set $k=0$\;
\While{$k \le 1$ or $|\eps_{\rm ub} - \eps_{\rm lb}| > {\rm tol}$}{
\nl \eIf{$f({\eps_k}) < {\rm tol}$}
{
Set $ \eps_{\rm ub} = \min(\eps_{\rm ub},\eps_k)$\;
}
{
Set $ \eps_{\rm lb} = \max(\eps_{\rm lb},\eps_k)$\;
}
\nl Compute $\eps_{k+1} = (\eps_{\rm lb} + \eps_{\rm ub})/2$ \ (bisection step)\;\
\eIf{$k=k_{\max}$}
{Return interval $[\eps_{\rm lb},\eps_{\rm ub}]$\; Halt}
{Set $k=k+1$}
\nl Compute $\Delta(\eps_k), \Theta(\eps_k), \Gamma(\eps_k)$\;
\nl Compute $f(\eps_k)$\;
}
\nl Return $\epstar = \eps_k$\;
}
\caption{Bisection method for distance approximation}
\label{alg_dist}
\end{algorithm}
\subsection{An illustrative example} \label{sec:mks}
To illustrate the performance of the described algorithm, we consider a scalable linear mass-spring-damper system, that has been used as a model reduction test case in \cite{HMM19}.
\begin{figure}[t]
\centerline{
\includegraphics[scale=0.5]{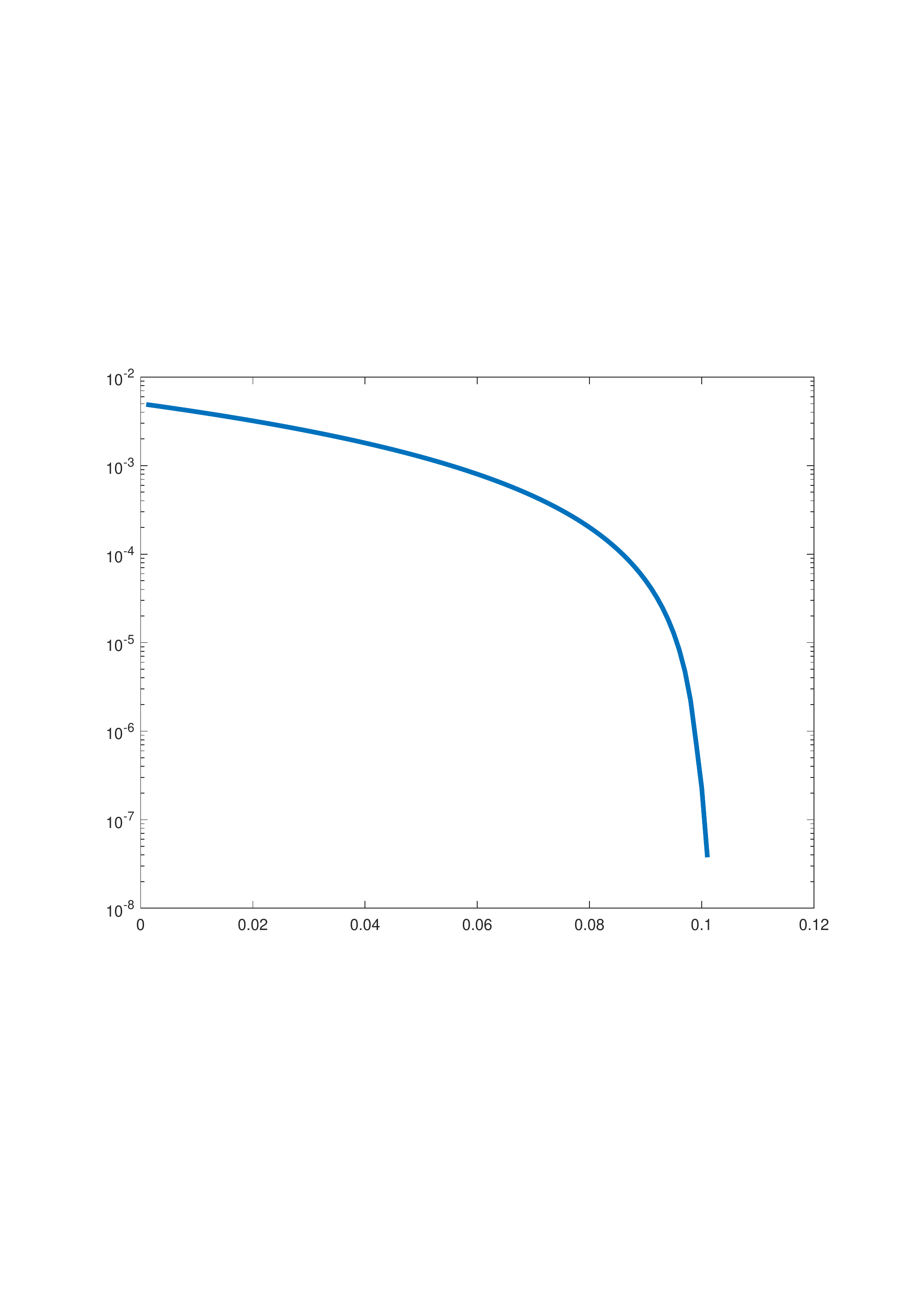}
}
\vspace{-3.6cm}
\caption{The function $f(\eps)$ for the mass-spring-damper example. \label{fig:exvm}}
\end{figure}

This test case generates matrices $M=M^T\geq 0,G=-G^T,D=D^T\geq0,K=K^T> 0$ and, after first order formulation, leads to a dH pencil
\[
\lambda \underbrace{\begin{bmatrix} K & 0 & 0\\ 0 & M & 0\\ 0&0&0\end{bmatrix}}_E-\left(\underbrace{\begin{bmatrix}0&K&0\\-K&0&-G^T\\
0&G&0\end{bmatrix}}_J-\underbrace{\begin{bmatrix}0&0&0\\0&D&0\\0&0&0\end{bmatrix}}_R\right)
\]
This pencil is regular and of index two. If one puts
$\gamma I$ in the $(3,3)$-block of  the matrix $R$, then the distance to index $2$ and instability is $\gamma$. Choosing the dimension $N=100$ we obtain matrices $E,R,J \in \R^{3 N +1}$.

We fix for $\gamma=10^{-1}$.
The plot of the function $f(\eps)$ obtained by integrating \eqref{eq:ode-o2} for increasing $\eps$,  is given in Figure \ref{fig:exvm}.

\section*{Conclusions and further work}

We have investigated a structured distance problem related to the study of port-Hamiltonian systems, that is determining
the closest triplet of matrices to a given one, sharing a common null-space.
%
%


\section*{Acknowledgments}

The first author acknowledges that his research was supported by funds from the
Italian MUR (Ministero dell'Universit\`a e della Ricerca) within the PRIN 2017
Project ``Discontinuous dynamical systems: theory, numerics and applications''
and by the INdAM Research group GNCS (Gruppo Nazionale di Calcolo Scientifico).

The second author  thanks Deutsche Forschungsgemeinschaft (DFG)
for  support within the project B03 in CRC TRR 154.


%
%
%

\end{document}